\numberwithin{equation}{section}
\theoremstyle{plain}
\newtheorem{theorem}{Theorem}[section]
\newtheorem{corollary}[theorem]{Corollary}
\newtheorem{lemma}[theorem]{Lemma}
\newtheorem{proposition}[theorem]{Proposition}
\theoremstyle{definition}
\newtheorem{remark}[theorem]{Remark}
\newtheorem{example}[theorem]{Example}
\newtheorem{definition}[theorem]{Definition}
\newcommand{\Cc}{{\mathbb C}}
\newcommand{\Nn}{{\mathbb N}}
\newcommand{\Rr}{{\mathbb R}}
\newcommand{\Zz}{{\mathbb Z}}
\newcommand{\Pp}{{\mathbb P}}
\newcommand{\Kk}{\mathbb K}
\newcommand{\cs}{\mathcal{S}}
\newcommand{\bfa}{{\bf a}}
\DeclareMathOperator{\rank}{rank}
\DeclareMathOperator{\init}{in}
\newcommand{\isom}{\cong}
\definecolor{benpurple}{RGB}{180, 0, 240}
\definecolor{joecyan}{RGB}{0,100,100}
\title{Computing Implicitizations of Multi-Graded Polynomial Maps}
\author{Joseph Cummings}
\address{University of Notre Dame }
\email{jcummin7@nd.edu}
\author{Benjamin Hollering}
\address{Technische Universit\"at M\"unchen, 85748 Garching b. München, Boltzmannstr. 3.,  Germany}
\email{benhollering@gmail.com}
\date{}
\keywords{}
\subjclass{}
\begin{document}

\begin{abstract}
    In this paper, we focus on computing the kernel of a map of polynomial rings $\varphi$. This core problem in symbolic computation is known as implicitization. While there are extremely effective Gr\"obner basis methods used to solve this problem, these methods can become infeasible as the number of variables increases. In the case when the map $\varphi$ is multigraded, we consider an alternative approach. We demonstrate how to quickly compute a matrix of maximal rank for which $\varphi$ has a positive multigrading. Then in each graded component we compute the minimal generators of the kernel in that multidegree with linear algebra. We have implemented our techniques in Macaulay2 and show that our implementation can compute many generators of low degree in examples where Gr\"obner techniques have failed. This includes several examples coming from phylogenetics where even a complete list of quadrics and cubics were unknown. When the multigrading refines total degree, our algorithm is \emph{embarassingly parallel} and a fully parallelized version of our algorithm will be forthcoming in OSCAR.
\end{abstract}

\maketitle

\section{Introduction}
Implicitization is a core problem in symbolic computations with many applications in a variety of scientific fields. This problem is focused on computing the kernel of a ring homomorphism $\varphi$
\begin{align*}
    \varphi : R = \Cc[x_1,\dotsc,x_n] &\to S = \Cc[t_1,\dotsc,t_m] \\
                    x_i & \mapsto \varphi_i(x_i)
\end{align*} 
This means one seeks to find a Gr\"obner basis or even just a generating set for the ideal $\ker(\varphi)$. Standard techniques for this typically rely on computing a Gr\"obner basis for the elimination ideal $\langle x_i - \varphi_i(x_i) \rangle$ with respect to an elimination order for the variables $t_j$ \cite{coxlittleoshea}. While modern Gr\"obner bases algorithms are extremely effective at solving a wide array problems, they still often become too expensive as the number of variables and the degree of the polynomials involved grows and are difficult to parallelize effectively \cite{msolve,faugere2002f5, faugere2016complexity}. 

In this paper we focus on computing the kernel of polynomial maps which arise in \emph{algebraic statistics} though our techniques apply more broadly. Many problems in algebraic statistics are fundamentally implicitization problems; however, for many families of interesting statistical models the number of variables involved grows exponentially. For instance, to compute the ideal of phylogenetic invariants for an $n$-leaf phylogenetic tree or network, there are $\approx 4^n$ variables involved \cite{allman2008gmm}. This means that it is often impossible to compute polynomials in $\ker(\varphi)$ for small trees or networks with a computer algebra system. In many algebraic statistics problems one may only need to find a single polynomial in $\ker(\varphi)$ to prove identifiability results \cite{allman2010identifiability, gross2018distinguishing, long2015identifiability} or a collection of statistically meaningful polynomials which can be used for model selection \cite{chifman2014quartet, eriksson2005tree, sturma2023testing}. Even in these cases where only some polynomials in the $\ker(\varphi)$ are needed, modern Gr\"obner bases algorithms which leverage homogeneity and degree-limiting may still fail to compute low-degree polynomials for small examples since there are so many variables involved \cite{martin2023algebraic}. For a thorough (and rather enjoyable) treatment of the Gr\"obner-based approach in the multi-graded setting, we refer the reader to \cite{CCA2}.

In this paper we provide an alternative algorithm to the common Gr\"obner-based approach which exploits the fact that many polynomial maps in algebraic statistics are actually homogeneous in a $\Zz^k$-multigrading. Our approach is inspired by the technique the authors used in \cite{cummings2021invariants} to compute the quadratic polynomials which vanish on certain phylogenetic network models as well as \cite{cummings2023multigraded} where the authors study multigraded Macaulay dual spaces.
In the following section we show how an essentially maximal multigrading in which $\ker(\varphi)$ is homogeneous can be computed without computing $\ker(\varphi)$. We then describe how the generators in $\ker(\varphi)$ which have a given multidegree $\beta \in \Zz^k$ can be computed by solving large linear systems. This means that if $\varphi$ is also homogeneous in the usual sense of total degree, then one compute all generators of total degree $d$ by computing the homogeneous component of $\ker(\varphi)$ with multidegree $\beta \in \Zz^k$ for all $\beta$ which are the multidegree of a monomial of total degree $d$. Moreover, this step is \emph{embarassingly parallel} meaning that the computation of each homogeneous component corresponding to $\beta \in \Zz^k$ can be computed completely in parallel. This makes our algorithm extremely effective at computing all of the low-degree polynomials in the kernel of a polynomial map which is homogeneous in a large multigrading.

The basic idea behind this technique has been noted before in the case that $\varphi$ is homogeneous with respect to the usual $\Zz$-grading given by total degree. However, for many large examples, this technique fails since the linear systems which one needs to solve grow exponentially in the number of variables, which in algebraic statistics often grows exponentially itself. By leveraging multigradings, we are able to instead solve many smaller systems completely in parallel. In our last section we showcase this technique on several examples from algebraic statistics and phylogenetics which Gr\"obner bases techniques or the previously known total-degree version of this algorithm are unable to solve. This includes finding all degree 2 and degree 3 phylogenetic invariants for 4 leaf networks under the Kimura 3-Parameter model. Recently, \cite{martin2023algebraic} attempted this same computation with degree-limited Gr\"obner bases and were unable to find these degree 3 generators even after 100 days of computation time. Another model of interest is the Timura-Nei model \cite{TN93}. This model is more flexible than group-based models and is used more widely in practice. While the vanishing ideal for a generic tree is still currently unkown, the authors in \cite{casanellas2023novel} showed that on an open subset, the ideal for a 4 leaf tree is a complete intersection of dimension 16, and they explicitly produce the ideal. Using our methods, we were able to show that the full ideal is not a complete intersection by exhibiting that there are 375 minimal quadrics in the vanishing ideal.

All of our code along with detailed explanations can be found on our MathRepo page
\begin{center}
    \url{https://mathrepo.mis.mpg.de/MultigradedImplicitization} 
\end{center}
or in the GitHub repository
\begin{center}
        \url{https://github.com/bkholler/MultigradedImplicitization}.
\end{center}
This includes a Macaulay2 \cite{M2} package with our main algorithm implemented. A fully parallelized version of our algorithm will also be available in OSCAR \cite{OSCAR} as soon as this functionality is supported. 

The remainder of this paper is organized as follows. In \cref{sec:MainAlg} we show how multigradings on polynomial maps can be computed and then describe our algorithm which computes $\ker(\varphi)$ up to a given degree. In \cref{sec:PhyloApplications} we examine several different applications of our algorithm to open implicitization problems in phylogenetics which are known to be difficult and show that many low degree polynomials can be found with our algorithm.

\section{The Main Algorithm}
\label{sec:MainAlg}
In this section we show how to compute the homogeneity space of the kernel of a polynomial map $\varphi$ without actually computing $\ker(\varphi)$. The homogeneity space of any ideal in a polynomial ring induces a maximal $\Zz^k$-multigrading in which the ideal is homogeneous. We then leverage this multigrading to give an \emph{embarrassingly parallelizable} algorithm for computing homogeneous components of $\ker(\varphi)$ which is powered by solving large linear systems. Our notation throughout this section is adapted from \cite{jensen2008ComputingGF}.  

\begin{definition}
    Let $f = \sum_{\alpha} c_\alpha x^\alpha \in \Kk[x_1,\dotsc,x_n]$ and let $\omega \in \Rr^n$. Then the \emph{initial form} of $f$ with respect to $\omega$ is 
    \[
        \mathrm{in}_\omega(f) = \sum_{\substack{c_\alpha \neq 0 \\ \omega \cdot \alpha \text{ is minimal}}} c_\alpha x^\alpha.
    \]
    For an ideal $I \subseteq \Kk[x_1,\dotsc,x_n]$ and $\omega \in \Rr^n$, the \emph{initial ideal} of $I$ with respect to $\omega$ is $\mathrm{in}_\omega(I) = \langle \mathrm{in}_\omega(f) ~|~ f \in I\rangle$.
\end{definition}

\begin{definition}
    Let $I \subseteq \Kk[x_1, \ldots, x_n]$ be an ideal. The \emph{homogeneity space} of $I$ is the linear space
    \[
    C_0(I) = \{\omega \in \Rr^n ~|~ \init_{\omega}(I) = I\}.
    \]
\end{definition}

The homogeneity space $C_0(I)$ consists of vectors which yield a grading in which $I$ is homogeneous. Indeed if $\omega \in C_0(I)$ and we set $\deg(x_i) = \omega_i$, then by definition $\mathrm{in}_\omega(f)$ is homogeneous with respect to this grading. Since $\mathrm{in}_\omega(I) = I$, we have that $I$ is generated by homogeneous polynomials, i.e. $I$ has a (possibly non-standard) $\Zz$-grading given by $\omega$. Now, let $b_1, \ldots, b_r \in \Zz^n$ be a basis for $C_0(I)$ and consider the matrix $A = (b_1 | b_2 | \ldots | b_r)^T \in \Zz^{r \times n}$. Then $I$ is homogeneous in the $\Zz^r$-multigrading given by $\deg(x_j) = A_{x_j}$ where $A_{x_j} \in \Zz^r$ is the $j$-th column of $A$ which naturally corresponds to $x_j$. Moreover, this multigrading is maximal in the sense of the following lemma.

\begin{lemma}
\label{lemma:MaximalMultigrading}
Let $A$ be as above and suppose $A' \in \Zz^{k \times n}$ is another matrix for which $I$ is homogeneous in the multigrading induced by $A'$. Then the row space of $A'$ is contained in the row space of $A$. Note that $k$ need not be equal to $r$.
\end{lemma}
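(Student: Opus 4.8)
The plan is to show that every row $v$ of $A'$ lies in the row space of $A$, i.e. that $v \in C_0(I)$ once we view $v$ as a vector in $\Rr^n$. Since $A$ was built from a $\Zz$-basis $b_1,\dots,b_r$ of $C_0(I)$, the row space of $A$ is exactly the $\Rr$-span of $C_0(I)$ (equivalently $C_0(I)$ itself, as $C_0(I)$ is a rational linear subspace). So it suffices to prove: if $I$ is homogeneous in the multigrading induced by $A'$, then each row of $A'$ is in $C_0(I)$.

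The key step is to unwind what it means for $I$ to be homogeneous with respect to $A'$. Being homogeneous in the $\Zz^k$-grading $\deg(x_j) = A'_{x_j}$ means $I$ has a generating set $g_1,\dots,g_s$ in which each $g_\ell$ is $A'$-homogeneous, i.e. all monomials appearing in $g_\ell$ have the same $A'$-degree $d_\ell \in \Zz^k$. Now fix a single row $v = A'_{i,\bullet} \in \Zz^n$. For any monomial $x^\alpha$ appearing in $g_\ell$, the $i$-th coordinate of its $A'$-degree is $v \cdot \alpha$, and this equals the $i$-th coordinate of $d_\ell$ for all such $\alpha$. Hence $v \cdot \alpha$ is constant over the support of $g_\ell$, which is precisely the statement that $\init_v(g_\ell) = g_\ell$ (every term is an initial term). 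Therefore $g_\ell \in \init_v(I)$ for every $\ell$, so $I = \langle g_1,\dots,g_s\rangle \subseteq \init_v(I)$. The reverse containment $\init_v(I) \subseteq I$ always holds when $I$ is $v$-homogeneous — more carefully, one checks the standard fact that $\dim_{\Kk} \big(\init_v(I)\big)_\beta = \dim_{\Kk} I_\beta$ in each graded piece so the two ideals coincide, or alternatively one argues directly that $\init_v(f)$ for arbitrary $f \in I$ is a $\Kk$-combination of the $g_\ell$'s times monomials. Either way $\init_v(I) = I$, so $v \in C_0(I)$.

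Finally, since every row of $A'$ lies in $C_0(I)$ and $C_0(I)$ is spanned (over $\Rr$, hence over $\Qq$, hence integrally up to scaling) by $b_1,\dots,b_r$ which form the rows of $A$, the row space of $A'$ is contained in the row space of $A$. This is why $k$ is irrelevant: $A'$ may have more rows than $A$, but they are forced to be linearly dependent combinations of the $r$ rows of $A$.

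The main obstacle I expect is the clean verification that $v$-homogeneity of a generating set forces $\init_v(I) = I$ rather than merely $I \subseteq \init_v(I)$; this is the one place where one must be slightly careful, since in general $\init_\omega$ of an ideal is not generated by the $\init_\omega$ of a generating set. The resolution is exactly the observation that when the chosen generators are already $v$-homogeneous, the filtration by $v$-degree is preserved, so no new initial forms appear — a short but essential argument. Everything else is bookkeeping with the definitions of multigrading and row space.
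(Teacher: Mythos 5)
Your proof is correct and takes essentially the same route as the paper: reduce to showing each row $v$ of $A'$ lies in $C_0(I)$, then use that the terms of any $A'$-homogeneous element all share the same $v$-weight, so $\init_v$ fixes it. You are right to flag the reverse containment $\init_v(I)\subseteq I$ as the one spot requiring care (the paper glosses over it); the cleanest resolution is not your ``alternative'' phrasing but the standard fact that an ideal generated by $v$-homogeneous elements is $v$-graded, so for any $f\in I$ its $v$-homogeneous components lie in $I$, and $\init_v(f)$ is simply the sum of those components of minimal $v$-weight, hence in $I$.
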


\begin{proof}
    Suppose $A'$ is equal to $(b_1' | b_2' | \ldots | b_k')^T \in \Zz^{k \times n}$. It is enough to show that $b_i'$ is in $C_0(I)$ for $i = 1, \dotsc, k$. To this end, consider any homogeneous element $f \in I$ of degree $\beta \in \Zz^k$. Then $f$ is of the form
    \[
        f = \sum_{A' \alpha = \beta} c_\alpha x^\alpha.
    \]
    Then for any $i$, we have that 
    \[
        \mathrm{in}_{b_i'}(f) = \sum_{\substack{A'\alpha = \beta \\ b_i' \cdot \alpha \text{ is minimal}}} c_\alpha x^\alpha
    \]
    As $f$ is homogeneous, $b_i' \cdot \alpha = \beta_i$ for all $\alpha$ appearing in $f$, so it follows that $\mathrm{in}_{b_i'}(f) = f$ for all such homogeneous polynomials. In particular, we conclude that $\mathrm{in}_{b_i'}(I) = I$ and $b_i'\in C_0(I)$ completing the proof.
\end{proof}

We now focus on the problem of finding the homogeneity space of the kernel of a polynomial map. So consider ring homomorphism $\varphi$ of the form
\begin{align*}
    \varphi : R = \Kk[x_1,\dotsc,x_n] &\to S = \Kk[t_1,\dotsc,t_m] \\
                    x_i & \mapsto \varphi(x_i)
\end{align*}
The following theorem and lemma gives an immediate technique to partially compute the homogeneity space of $\ker(\varphi)$.

\begin{lemma}\label{lemma:homogeneitySpace}
    Let $\varphi : R \to S$ be as above, and let $J = \langle x_i - \varphi(x_i) ~|~ i \in [n]\rangle$ be the elimination ideal. Then 
    \[
    C_0(J) = \{ \omega \in \Rr^n ~|~ \mathrm{in}_\omega(x_i - \varphi(x_i)) = x_i - \varphi(x_i) \text{ for all } i \in [n]\}.
    \]
\end{lemma}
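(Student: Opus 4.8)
The plan is to prove the two inclusions of the claimed equality separately, using the elementary facts recalled just before the statement: for an ideal $I$ and a weight vector $\omega$, one has $\omega \in C_0(I)$ if and only if $I$ is homogeneous for the grading $\deg(x_j)=\omega_j$; equivalently, $I$ has a generating set of $\omega$-homogeneous elements; equivalently, every element of $I$ has all of its $\omega$-homogeneous components again in $I$. Throughout I treat $J$ as an ideal of $\Kk[x_1,\dotsc,x_n,t_1,\dotsc,t_m]$ and $\omega$ as a weight vector on all of these variables, and I abbreviate $g_i := x_i - \varphi(x_i)$.

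The inclusion $\supseteq$ is immediate: if $\mathrm{in}_\omega(g_i)=g_i$ for every $i$, then every generator of $J=\langle g_1,\dotsc,g_n\rangle$ is $\omega$-homogeneous, so $J$ is generated by $\omega$-homogeneous elements and is therefore $\omega$-homogeneous, that is, $\omega\in C_0(J)$.

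The content lies in the reverse inclusion. Assume $\omega\in C_0(J)$, so that $J$ is $\omega$-homogeneous; I must deduce that each $g_i$ is $\omega$-homogeneous. The first step is to observe that $J$ meets the subring $\Kk[t_1,\dotsc,t_m]$ only in $0$: the ring map $\psi$ sending $x_i\mapsto\varphi(x_i)$ and $t_j\mapsto t_j$ kills each $g_i$, hence factors through $\Kk[x,t]/J$, and since $\psi$ restricts to the identity on $\Kk[t_1,\dotsc,t_m]$ the composite $\Kk[t_1,\dotsc,t_m]\hookrightarrow\Kk[x,t]\twoheadrightarrow\Kk[x,t]/J$ is injective, which says precisely that $J\cap\Kk[t_1,\dotsc,t_m]=0$. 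Granting this, fix $i$; if $\varphi(x_i)=0$ the claim is trivial, so write $\varphi(x_i)=\sum_d p_d$ for its decomposition into $\omega$-homogeneous pieces. Since $x_i$ has $\omega$-degree $\omega_i$ and the $p_d$ involve only the $t$-variables, the $\omega$-homogeneous components of $g_i$ are $x_i-p_{\omega_i}$ in degree $\omega_i$ and $-p_d$ in each degree $d\neq\omega_i$. As $J$ is $\omega$-homogeneous and $g_i\in J$, all of these components lie in $J$; but for $d\neq\omega_i$ we have $-p_d\in\Kk[t_1,\dotsc,t_m]$, so $p_d=0$ by the previous step. Hence $\varphi(x_i)=p_{\omega_i}$ is $\omega$-homogeneous, whence $g_i$ is as well and $\mathrm{in}_\omega(g_i)=g_i$.

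I expect the only genuine subtlety to be this reverse inclusion, and within it the observation $J\cap\Kk[t_1,\dotsc,t_m]=0$: this is exactly where the ``graph'' form of the generators $x_i-\varphi(x_i)$ enters, and it is needed because homogeneity of an ideal does not by itself force an arbitrary generating set to consist of homogeneous elements. Everything else is routine bookkeeping with graded components.
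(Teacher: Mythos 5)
Your proof is correct and follows essentially the same route as the paper's: the forward inclusion is the standard ``generated by homogeneous elements implies homogeneous ideal'' fact, and the reverse inclusion rests on decomposing $x_i - \varphi(x_i)$ into $\omega$-homogeneous components and invoking $J \cap \Kk[t_1,\dotsc,t_m] = 0$. Your justification of that intersection via the retraction $\psi$ is a bit more explicit than the paper's brief appeal to the algebraic independence of $x_1,\dotsc,x_n$, but it is the same underlying argument.
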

\begin{proof}
    One inclusion is obvious. 
    If $\mathrm{in}_\omega(x_i - \varphi(x_i)) = x_i - \varphi(x_i)$ for all $i$, then $\omega \in C_0(J)$. Indeed, if this is the case, then we automatically have that $\mathrm{in}_\omega(J) \supseteq J$. On the other hand, if $f$ is in $J$, then $f = \sum_i h_i (x_i - \varphi(x_i))$ 
    and $\mathrm{in}_\omega(f) = \sum_i \mathrm{in}_\omega (h_i) (x_i - \varphi(x_i))$, so $\mathrm{in}_\omega (f) \in J$. As $\mathrm{in}_\omega(J)$ is generated by all such initial forms we have $\mathrm{in}_\omega(J) = J$.

    Now, let $\omega \in C_0(J)$. Fix an $i \in [n]$ and consider $x_i - \varphi(x_i)$. Since $J$ is homogeneous with respect to $\omega$, we could rewrite $x_i - \varphi(x_i)$ as $\sum_{i=1}^N f_i$ where each $f_i$ satisfies $\mathrm{in}_\omega (f_i) = f_i$ and $f_i \in J$ for all $i\in [N]$. We can assume that $f_1$ has $x_i$ as one of its terms, so $f_1$ takes the form $x_i - \sum_{\alpha} c_\alpha t^\alpha$. The fact that $f_1$ is in the ideal implies that $\sum_{j=2}^N f_j = \varphi(x_i) - (f_1 - x_i)$ lies in $J\cap S$. Since $x_1,\dotsc,x_n$ are algebraically independent, $J\cap S = \{0\}$. Therfore we must have $f_j = 0$ for each $j \geq 2$. It follows that $\mathrm{in}_\omega(x_i - \varphi(x_i)) = x_i - \varphi(x_i)$ for all $i \in [n]$ as claimed.
\end{proof}

\begin{theorem}\label{thm: grading the kernel}
    Let $\varphi: R \to S$ be a homomorphism of polynomial rings as above and $J = \langle x_i - \varphi(x_i) ~|~ i \in [n]\rangle \subseteq \Kk[x_1, \ldots, x_n, t_1, \ldots t_m]$ be the associated elimination ideal. Let $b_1, \ldots,  b_r \in \Zz^{n+m}$ be a basis for the homogeneity space $C_0(J)$ and write $b_i = (b_i', b_i'') \in \Zz^{n+m}$. Then $b_1', \ldots, b_r' \in \Zz^n$ are contained in the homogeneity space of $\ker(\varphi)$.
\end{theorem}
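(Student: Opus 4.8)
The plan is to show that each $b_i' \in \Zz^n$ lies in the homogeneity space $C_0(\ker(\varphi))$ by relating initial forms over $R$ to initial forms over the big ring $R \otimes S = \Kk[x_1,\dotsc,x_n,t_1,\dotsc,t_m]$. The key structural fact I would exploit is the standard relationship between the elimination ideal $J$ and the kernel, namely $\ker(\varphi) = J \cap R$, which follows because $J$ is precisely the ideal defining the graph of $\varphi$ inside $\Spec(R \otimes S)$ and eliminating the $t$-variables recovers the image closure.

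First I would fix $i \in \{1,\dotsc,r\}$ and show $b_i' \in C_0(\ker(\varphi))$, which amounts to checking $\mathrm{in}_{b_i'}(g) = g$ for every $g \in \ker(\varphi)$ (this suffices by the same argument as in the proof of \cref{lemma:MaximalMultigrading}: once every generator's initial form is itself, the initial ideal equals the ideal). So take $g \in \ker(\varphi) \subseteq R$. Since $\ker(\varphi) \subseteq J$, we have $g \in J$, and because $b_i = (b_i', b_i'') \in C_0(J)$ we know $\mathrm{in}_{b_i}(g) = g$ when $g$ is viewed in $R \otimes S$. The point is that $g$ only involves the $x$-variables, so for a monomial $x^\alpha$ appearing in $g$ (with the $t$-exponent zero) the dot product $b_i \cdot (\alpha, 0) = b_i' \cdot \alpha$; hence the weight $b_i$ assigns to each term of $g$ is exactly the weight $b_i'$ assigns to it as an element of $R$. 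Therefore $\mathrm{in}_{b_i}(g)$ computed in $R \otimes S$ literally coincides with $\mathrm{in}_{b_i'}(g)$ computed in $R$, and since the former equals $g$, so does the latter.

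Having established $\mathrm{in}_{b_i'}(g) = g$ for all $g \in \ker(\varphi)$, I would conclude $\mathrm{in}_{b_i'}(\ker(\varphi)) \supseteq \ker(\varphi)$ trivially; and since $\mathrm{in}_{b_i'}(\ker(\varphi))$ is generated by the initial forms of elements of $\ker(\varphi)$, all of which lie in $\ker(\varphi)$, we get the reverse inclusion and hence equality. Thus $b_i' \in C_0(\ker(\varphi))$ for each $i$, which is the claim. The main obstacle — really the only nontrivial input — is justifying $\ker(\varphi) = J \cap R$; this is classical (see, e.g., \cite{coxlittleoshea}), and once it is in hand the rest is a bookkeeping argument about how weight vectors on the product ring restrict to the $x$-variables. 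One minor point to handle carefully is that $\mathrm{in}_\omega$ here selects terms of \emph{minimal} weight rather than maximal, but since this convention is used uniformly the argument is unaffected.
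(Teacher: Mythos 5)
There is a genuine gap in your argument, and it occurs at the pivotal step. You write that because $b_i \in C_0(J)$ and $g \in J$, we may conclude $\mathrm{in}_{b_i}(g) = g$. This is false. The condition $\omega \in C_0(J)$, i.e.\ $\mathrm{in}_\omega(J) = J$, means $J$ admits a generating set of $\omega$-homogeneous elements; it does \emph{not} mean that every element of $J$ is $\omega$-homogeneous. For a concrete counterexample, take $J = \langle x - y\rangle \subseteq \Kk[x,y]$ and $\omega = (1,1)$: here $\omega \in C_0(J)$, yet $g = (x-y)(1+x) = x - y + x^2 - xy \in J$ has $\mathrm{in}_\omega(g) = x - y \neq g$. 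The same error recurs in your appeal to \cref{lemma:MaximalMultigrading}: that proof only establishes $\mathrm{in}_{b_i'}(f) = f$ for $f$ that are already \emph{homogeneous} in the $A'$-grading, not for arbitrary $f \in I$. So the premise "$\mathrm{in}_{b_i'}(g) = g$ for all $g \in \ker(\varphi)$" on which your conclusion hangs is simply not available.

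The good news is that the overall route — transferring weights from $C_0(J)$ to $C_0(\ker(\varphi))$ via the elimination identity $\ker(\varphi) = J \cap R$ — can be salvaged, and if done right it is arguably a cleaner argument than the paper's. The correct chain is: for $g \in \ker(\varphi) \subseteq J$, one has $\mathrm{in}_{b_i}(g) \in \mathrm{in}_{b_i}(J) = J$ (membership in $J$, not equality with $g$); since $g$ involves only the $x$-variables, $\mathrm{in}_{b_i}(g) = \mathrm{in}_{b_i'}(g) \in R$; hence $\mathrm{in}_{b_i'}(g) \in J \cap R = \ker(\varphi)$. This yields $\mathrm{in}_{b_i'}(\ker(\varphi)) \subseteq \ker(\varphi)$, and one then needs the (true, but not free) lemma that an inclusion $\mathrm{in}_\omega(I) \subseteq I$ of a weight-initial ideal forces equality — provable by peeling off weight components of an arbitrary $f \in I$ and showing inductively that each one lies in $I$, or by comparing affine Hilbert functions along the Gröbner degeneration. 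The paper sidesteps that lemma entirely: it runs Buchberger's algorithm on the $b_i$-homogeneous generators of $J$ under an elimination order, observes that $S$-pairs and reductions preserve $b_i$-homogeneity, and extracts from $\mathcal{G} \cap R$ a $b_i'$-homogeneous generating set of $\ker(\varphi)$ directly. That gives $\mathrm{in}_{b_i'}(\ker(\varphi)) = \ker(\varphi)$ immediately, with no need for the Hilbert-function or peeling step. So even the corrected version of your argument trades the paper's Gröbner-theoretic input for a different (and also nontrivial) input about weight-initial ideals; as written, though, the argument does not go through.
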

\begin{proof}
    Let $i \in [n]$. We will show that $b_i' \in C_0(\ker(\varphi))$. By Lemma \ref{lemma:homogeneitySpace} the generators of $J$, i.e. $x_j - \varphi(x_j)$, are all homogeneous with respect to the $\Zz$-grading given by $b_i$. Now fix a lexicographic term ordering where $t_i \succ x_j$ for all $(i,j) \in [m] \times [n]$. In order to compute $\ker(\varphi)$, we need to compute a Gr{\"o}bner basis with respect to $\prec$. Our key insight is that each step in Buchberger's algorithm always adds homogeneous (with respect to $b_i$) polynomials to the generating set. This is because an $S$-pair of two homogenous polynomials is also homogeneous and the reduction of an $S$-pair by homogeneous polynomials will also be homogeneous. 

    Now let $\mathcal{G}$ be the resulting Gr{\"o}bner basis and let $\mathcal{G}' = \mathcal{G} \cap R$. As discussed above, each element of $\mathcal{G}'$ is homogeneous with respect to $b_i$, but as they involve no $t_j$'s, they are also homogeneous with respect to $b_i'$. It follows that $\mathrm{in}_{b_i'}(\mathcal{G}') = \mathcal{G}'$ and that $\mathrm{in}_{b_i'}(\ker(\varphi)) = \ker(\varphi)$.
\end{proof}

\begin{remark}
    It is important to note that the elements in the homogeneity space of $\ker(\varphi)$ obtained from \cref{thm: grading the kernel} are generally not independent nor spanning. We will see in \cref{exa: grassmannian multigrading}, that the rows of $A$ are not independent. For an example where they aren't spanning, consider the following map.
    \begin{align*}
    \varphi : \Kk[x,y,z] &\to \Kk[a,b]\\
        x &\mapsto (a+b)^2 \\
        y &\mapsto a^2 - b^2 \\
        z &\mapsto (a-b)^2
    \end{align*}
    The kernel of this map is the toric ideal $\langle xz - y^2\rangle$; however, using \cref{thm: grading the kernel}, we only detect a 1-dimensional subspace of the 2-dimensional homogeneity space. 

\end{remark}

\begin{corollary}
\label{cor:MaxGrading}
    Let $\varphi: R \to S$ be a homomorphism of polynomial rings, $J$ be the associated elimination ideal, and $b_i = (b_i', b_i'') \in \Zz^{n+m}, ~ i \in [k]$ be a basis for $C_0(J)$. 
    Let $A = (b_1 | b_2 | \ldots | b_r)^T \in \Zz^{r \times (n+m)}$. Then $\varphi$ is homogeneous in the multigrading given by $\deg(t_j) = A_{t_j}$ and $\deg(x_j) = A_{x_j}$. Moreover, $\ker(\varphi)$ is homogeneous in the induced multigrading which is $\deg(x_j) = A_{x_j}$. 
\end{corollary}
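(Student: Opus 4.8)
The plan is to obtain both assertions by combining \cref{lemma:homogeneitySpace} with \cref{thm: grading the kernel}; no new ideas are required, so I expect the only care needed to be in the index bookkeeping. Let $r = \dim_{\Rr} C_0(J)$ and, for $\ell \in [r]$, write $b_\ell = (b_\ell', b_\ell'') \in \Zz^n \times \Zz^m$, so that the column of $A$ indexed by $x_j$ is $A_{x_j} = ((b_1')_j, \dots, (b_r')_j)^{T}$ and the column indexed by $t_j$ is $A_{t_j} = ((b_1'')_j, \dots, (b_r'')_j)^{T}$. I would first record the translation dictionary: under the multigrading $\deg(x_j) = A_{x_j}$, $\deg(t_j) = A_{t_j}$, the $\ell$-th coordinate of the multidegree of a monomial $x^\gamma t^\alpha$ equals $b_\ell' \cdot \gamma + b_\ell'' \cdot \alpha$, i.e. exactly the weight of that monomial under $b_\ell$.

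For the homogeneity of $\varphi$, fix $i \in [n]$ and $\ell \in [r]$. Since $b_\ell \in C_0(J)$, \cref{lemma:homogeneitySpace} gives $\init_{b_\ell}(x_i - \varphi(x_i)) = x_i - \varphi(x_i)$, i.e. $x_i - \varphi(x_i)$ is $b_\ell$-homogeneous. Hence every monomial $t^\alpha$ appearing in $\varphi(x_i)$ has the same $b_\ell$-weight as the monomial $x_i$, namely $b_\ell'' \cdot \alpha = (b_\ell')_i$. As this holds for every $\ell$, the dictionary above shows every such monomial carries multidegree $((b_1')_i, \dots, (b_r')_i)^{T} = A_{x_i} = \deg(x_i)$, so $\varphi(x_i)$ is multihomogeneous of multidegree $\deg(x_i)$, and therefore $\varphi$ preserves the multigrading.

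For the homogeneity of $\ker(\varphi)$, \cref{thm: grading the kernel} already shows $b_1', \dots, b_r' \in C_0(\ker(\varphi))$, and the multigrading $\deg(x_j) = A_{x_j}$ on $R$ is by definition the $\Zz^r$-multigrading whose weight vectors are precisely the rows $b_1', \dots, b_r'$; an ideal that is homogeneous with respect to each $b_\ell'$ is homogeneous in this multigrading, so $\ker(\varphi)$ is homogeneous in $\deg(x_j) = A_{x_j}$. Alternatively, having established that $\varphi$ is a degree-preserving homomorphism of $\Zz^r$-graded rings, one can argue directly: decompose any $f \in \ker(\varphi)$ into multihomogeneous components $f = \sum_\beta f_\beta$; then each $\varphi(f_\beta)$ is multihomogeneous of degree $\beta$ and $\sum_\beta \varphi(f_\beta) = 0$ in the graded ring $S$, which forces every $\varphi(f_\beta) = 0$, so all $f_\beta$ lie in $\ker(\varphi)$.

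I do not foresee a substantive obstacle here: the corollary is essentially a repackaging of the preceding lemma and theorem. The one place to be careful is exactly the coordinate matching in the first paragraph — identifying the columns $A_{x_j}$, $A_{t_j}$ of $A$ with the vectors of $b_\ell$-weights of the corresponding monomials — and I would state that identification explicitly up front to keep the rest of the argument transparent. (I would also silently take $k=r=\dim C_0(J)$, reconciling the two symbols used in the statement.)
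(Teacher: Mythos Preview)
Your proposal is correct and matches the paper's approach: the paper states this result as a corollary with no separate proof, treating it as an immediate consequence of \cref{lemma:homogeneitySpace} and \cref{thm: grading the kernel}, which is exactly what you do. Your explicit bookkeeping and the alternative graded-homomorphism argument for the second claim are fine additions but not required.
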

    
Note that in the previous corollary we make the natural identification between the columns of the matrix $A$ with the corresponding variables in the polynomial rings $R$ and $S$ for simplicity of notation. The following example illustrates this corollary. 

\begin{example}\label{exa: grassmannian multigrading}
    Consider the Pl{\"u}cker embedding of $\mathrm{Gr}(2,4)$. Set $R = \Cc[p_{ij} ~|~ 1 \leq i < j \leq 4]$ and $S = \Cc[x_{ij} ~|~ i \in [2], j \in [4]] $.
    \begin{align*}
        \varphi :  R&\to S \\
        p_{ij} &\mapsto \det (M_{ij})
    \end{align*}
    Here $M$ is a $2 \times 5$ matrix whose entries are the variables $x_{ij}$ and $M_{ij}$ corresponds to the square $2\times 2$ sub-matrix of $M$ whose columns are the $i^\text{th}$ and $j^\text{th}$ columns of $M$. If we let $J$ be the elimination ideal of $\varphi$, the homogeneity space $C_0(J) \subset \Rr^5$ is the rowspan of the following matrix.

    \[
    \begin{blockarray}{cccccccccccccc}
    p_{1,2} & p_{1,3} & p_{2,3} & p_{1,4} & p_{2,4} & p_{3,4} & x_{1,1} & x_{1,2} & x_{1,3} & x_{1,4} & x_{2,1} 
    & x_{2,2} & x_{2,3} & x_{2,4} \\
    \begin{block}{(cccccc|cccccccc)}
      1&1&1&1&1&1&1&1&1&1&0&0&0&0\\
      1&1&0&1&0&0&1&0&0&0&1&0&0&0\\
      1&0&1&0&1&0&0&1&0&0&0&1&0&0\\
      0&1&1&0&0&1&0&0&1&0&0&0&1&0\\
      -1&-1&-1&0&0&0&-1&-1&-1&0&0&0&0&1\\
      \end{block}
      \end{blockarray}
    \]

    The lattice spanned by the first 6 columns of the matrix above has rank 4, so we deduce that there is an action of $(\Cc^\times)^4$ on the affine cone of $\mathrm{Gr}(2,4)$. The row of ones corresponds to the usual scaling action on $\Cc^6$ used to construct $\Pp^5 \isom \Cc^6 / \Cc^\times$, so there is a 3 dimensional torus $(\Cc^\times)^4/\Cc^\times$ acting on $\mathrm{Gr}(2,4) \subseteq \Pp^5$.
    
\end{example}

So given any polynomial map $\varphi$, \cref{cor:MaxGrading} allows one to inexpensively compute a multigrading in which $\ker(\varphi)$ is homogeneous. We now focus on the task of computing minimal generators of $\ker(\varphi)$ of a fixed total degree $d$. One naive way of doing this is to consider an arbitrary element 
\begin{equation}
\label{eqn:GenericTotalDegPoly}
    f^{(d)} = \sum_{\substack{ \alpha \\ \alpha \cdot \mathbbm{1} = d}} c_\alpha x^\alpha
\end{equation}
of total degree $d$. Then of course we know that $f \in \ker(\varphi)$ if and only if $\varphi(f) = 0$. Observe that we can simply compute $\varphi(f)$, collect the coefficients of each monomial $t^\gamma$, and set each of these to 0. This gives us necessary and sufficient linear conditions on the coefficients $c_\alpha$. Computing a basis for the set of all such $c_\alpha$ then gives us a set of minimal generators of degree $d$ for $\ker(\varphi)$. This is demonstrated by the following example. 

\begin{example}\label{exa: grassmannian quadrics}
    We continue with \cref{exa: grassmannian multigrading} by finding the quadrics in $I = \ker(\varphi)$. Consider a generic quadric in $R$.
    \begin{align*}
    f^{(2)} &= c_{\left(1,\,2\right),\left(1,\,2\right)}p_{1,2}^{2}+c_{\left(1,\,2\right),\left(1,\,3\right)}p_{1,2}p_{1,3}+c_{\left(1,\,3\right),\left(1,\,3\right)}p_{1,3}^{2}+c_{\left(1,\,2\right),\left(2,\,3\right)}p_{1,2}p_{2,3}+c_{\left(1,\,3\right),\left(2,\,3\right)}p_{1,3}p_{2,3}+\\
    &\;\;\;\;\;c_{\left(2,\,3\right),\left(2,\,3\right)}p_{2,3}^{2}+c_{\left(1,\,2\right),\left(1,\,4\right)}p_{1,2}p_{1,4}+c_{\left(1,\,3\right),\left(1,\,4\right)}p_{1,3}p_{1,4}+c_{\left(2,\,3\right),\left(1,\,4\right)}p_{2,3}p_{1,4}+c_{\left(1,\,4\right),\left(1,\,4\right)}p_{1,4}^{2}+\\
    &\;\;\;\;\;c_{\left(1,\,2\right),\left(2,\,4\right)}p_{1,2}p_{2,4}+c_{\left(1,\,3\right),\left(2,\,4\right)}p_{1,3}p_{2,4}+c_{\left(2,\,3\right),\left(2,\,4\right)}p_{2,3}p_{2,4}+c_{\left(1,\,4\right),\left(2,\,4\right)}p_{1,4}p_{2,4}+\\
    &\;\;\;\;\;c_{\left(2,\,4\right),\left(2,\,4\right)}p_{2,4}^{2}+c_{\left(1,\,2\right),\left(3,\,4\right)}p_{1,2}p_{3,4}+c_{\left(1,\,3\right),\left(3,\,4\right)}p_{1,3}p_{3,4}+c_{\left(2,\,3\right),\left(3,\,4\right)}p_{2,3}p_{3,4}+ \\
    &\;\;\;\;\;c_{\left(1,\,4\right),\left(3,\,4\right)}p_{1,4}p_{3,4}+c_{\left(2,\,4\right),\left(3,\,4\right)}p_{2,4}p_{3,4}+c_{\left(3,\,4\right),\left(3,\,4\right)}p_{3,4}^{2}
    \end{align*}
    As stated above, we can apply $\varphi$ to $f^{(2)}$, collect coefficients, and get necessary and sufficient linear conditions on the $c_{(i,j),(k,\ell)}$'s to find a basis for the the kernel of $\varphi$ in degree 2. This is more than a little cumbersome, so we will forego showing you this computation explicitly; however, we will describe how to implement this in your favorite computer algebra system.
    
    There are $\binom{6 + 2 - 1}{2} = 21$ monomials spanning $R_2$ and the images of each monomial are supported on 72 monomials of degree 4 in $S$. In order to find a basis for $\ker(\varphi)$ in degree 2, we need to find the \emph{linear} relations among the polynomials $\varphi(p_{ij}p_{k\ell})$. This amounts to finding the kernel of a $72 \times 21$ matrix $C$. The columns of this matrix are indexed by the monomials spanning $R_2$ and the rows are indexed by the monomials in $S_4$ on which $\varphi(p_{ij}p_{k\ell})$ are supported. The entry $C_{x^\alpha, p_{ij}p_{k\ell}}$ is the coefficient of $x^{\alpha}$ in $\varphi(p_{ij}p_{k\ell})$. The kernel of $C$ is generated by exactly one element and it corresponds to the Pl{\"u}cker relation $p_{2,3}p_{1,4}-p_{1,3}p_{2,4}+p_{1,2}p_{3,4}$.
\end{example}

While the previous approach can be used occasionally it is often not helpful since the generic polynomial $f$ which one has to consider has $\binom{n + d- 1}{d}$ terms which grows exponentially in $d$. However, if we instead apply this technique to each homogeneous component of a finer multigrading, we can solve much smaller linear systems instead. So let $A \in \Zz^{r \times n}$ be a maximal rank multigrading on $\ker(\varphi)$ and assume that the $\mathbbm{1} \in \mathrm{rowspan}(A)$ which guarantees that $\ker(\varphi)$ is also homogeneous in the typical sense of total degree. This implies that $\ker(\varphi)$ has a minimal generating set $\{f_1, \ldots f_k \}$ consisting of polynomials where each minimal generator $f_i$ is homogeneous in the multigrading determined by $A$. So if we wish to compute a set of minimal generators of $\ker(\varphi)$ which are total degree $d$, then we can instead consider each multidegree $A \alpha = \beta \in \Nn A$ such that $\alpha \cdot \mathbbm {1} = d$ separately. This means to compute all degree $d$ minimal generators,  we no longer consider a polynomial of the form found in \cref{eqn:GenericTotalDegPoly} but instead we consider a polynomial of the form
\begin{equation}
\label{eqn:GenericMultidegPoly}
    f^{(\beta)} = \sum_{\substack{\alpha \\ A \alpha = \beta}} c_\alpha x^\alpha
\end{equation}
for each homogeneous component of degree $\beta \in \Nn A$ such that $\beta = A \alpha$ and $\alpha \cdot \mathbbm{1} = d$.  Generally, as the multigrading $A$ becomes finer, meaning $\rank(A)$ becomes larger, the size of the monomial basis $M_\beta = \{\alpha \in \Nn^n ~|~ A \alpha = \beta \}$ for the homogeneous component $R_\beta = \{f \in R ~|~ \deg(f) = \beta \}$ becomes smaller. This means instead of solving one extremely large linear system which corresponds to $\varphi(f^{(d)}) = 0$ as we would get from \cref{eqn:GenericTotalDegPoly}, we can solve many small linear systems which come from settings $\varphi(f^{(\beta)}) = 0$ for each $\beta$. The following example elucidates this. 

\begin{example}\label{exa: grassmannian quadrics with multigrading}
    We continue with our running example of $\mathrm{Gr}(2,4)$. As we saw in \cref{exa: grassmannian quadrics}. There was a single quadratic Pl{\"u}cker relation. However, we had to compute the kernel of a $72 \times 21$ matrix. Using the multigrading from \cref{exa: grassmannian multigrading}, we can greatly reduce the size of this computation. The total degree 2 component of $R$ can be divided into 19 homogeneous components using the grading matrix from \cref{exa: grassmannian multigrading}. Only one of these homogeneous components has a basis with more than a single element. This is $R_{(2,1,1,1,-1)}$ and is spanned by $M_{(2,1,1,1,-1)} = \{p_{1,2}p_{3,4},p_{1,3}p_{2,4}, p_{2,3}p_{1,4}\}$. There will be no relations supported on the other components since there are evidently no monomials in $\ker(\varphi)$. Consider a generic polynomial of degree $(2,1,1,1,-1)$.
    \[
        f^{(2,1,1,1,-1)} = c_1 p_{1,2}p_{3,4}+ c_2p_{1,3}p_{2,4}+c_3 p_{2,3}p_{1,4}.
    \]
    Now, we can find the necessary and sufficient linear relations among the $c_i$'s to ensure $f^{(2,1,1,1,-1)}$ is in $\ker(\varphi)$. This can be done as follows.

    There are 6 monomials that appear when we apply $\varphi$ to the monomial basis of $R_{(2,1,1,1,-1)}$, so we construct a $6\times 3$ matrix whose columns are indexed by the elements of $M_{(2,1,1,1,-1)}$ and whose rows are indexed by these 6 monomials. The entry in row $x^\alpha$ and column $p_{i,j}p_{k,\ell}$ is the coefficient of $x^\alpha$ in $\varphi(p_{i,j}p_{k,\ell})$.
    \[\begin{blockarray}{cccc}
     & p_{1,2}p_{3,4} &p_{1,3}p_{2,4}&p_{2,3}p_{1,4} \\
    \begin{block}{l(ccc)}
    x_{1,3}x_{1,4}x_{2,1}x_{2,2} & 0&1&1\\
    x_{1,2}x_{1,4}x_{2,1}x_{2,3} & 1&0&-1\\
    x_{1,1}x_{1,4}x_{2,2}x_{2,3} & -1&-1&0\\
    x_{1,2}x_{1,3}x_{2,1}x_{2,4} & -1&-1&0\\
    x_{1,1}x_{1,3}x_{2,2}x_{2,4} & 1&0&-1\\
    x_{1,1}x_{1,2}x_{2,3}x_{2,4} & 0&1&1 \\
      \end{block}
      \end{blockarray}
    \]
    The kernel of this matrix is spanned by $(1,-1,1)^T$ giving us the Pl{\"u}cker relation $p_{1,2}p_{3,4} - p_{1,3}p_{2,4}+p_{2,3}p_{1,4}$. 
\end{example}

This idea gives an immediate algorithm for computing all of the minimal generators of $\ker(\varphi)$ of degree at most $d$ which can be found at the end of this section. First, we note that while building the set of minimal generators in degree $d$, one may further reduce the set $M_\beta$ for each $\beta$ such that $\beta \cdot \mathbbm{1} = d$ using the set of generators of degree strictly less than $d$. This idea is captured by the following proposition and example. 

\begin{proposition}
\label{prop:TrimMonomialBasis}
Suppose $H$ is a minimal homogeneous generating set for $\ker(\varphi)$. Let $d$ be a positive integer, let $G = \{g \in H ~|~ \deg(g) = \beta_g = A\alpha_g \text{ with } \alpha_g \cdot \mathbbm{1} < d\}$ and let $\beta = A \alpha$ where $\alpha \cdot \mathbbm{1} = d$. Consider the vector space 
\[
\mathrm{Lift}(G) = \mathrm{span}_{\mathbb{K}} \{x^{\gamma} g ~|~ \deg(x^\gamma) = \beta - \deg(g) \text{ and } g \in G\} \subseteq R_\beta.
\] We can write $R_\beta$ as a direct sum $\mathrm{Lift}(G) \oplus V_\beta$. Then the minimal generators of $\ker(\varphi)$ of degree $\beta$ can be chosen to be supported on $V_\beta$.
\end{proposition}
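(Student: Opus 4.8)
The plan is to show that $\ker(\varphi)_\beta \subseteq \mathrm{Lift}(G) \oplus (\text{minimal generators in degree } \beta)$, which immediately gives the statement after choosing a complement. First I would unwind the meaning of "minimal generating set". Since $H$ is a minimal homogeneous generating set, for the fixed multidegree $\beta$ we can split $H$ into the part $G$ of generators whose total degree is $< d$, the part $H_\beta$ of generators of multidegree exactly $\beta$, and the rest (which cannot contribute to $\ker(\varphi)_\beta$ at all, since a generator of total degree $> d$ times a nonzero monomial lands in total degree $> d$, and a generator of multidegree $\beta' \neq \beta$ times a monomial of the appropriate degree would need $\deg(x^\gamma) = \beta - \beta'$, which still needs to be tracked). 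The key structural fact is that every element of $\ker(\varphi)_\beta$ is a $\mathbb{K}$-linear combination of elements $x^\gamma h$ with $h \in H$ and $\deg(x^\gamma h) = \beta$; this is just homogeneity of the multigrading applied to a generating expression $f = \sum_h p_h h$ together with the fact that each $p_h$ may be taken homogeneous of the complementary multidegree.

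Next I would argue that the contributions indexed by $h \in H$ with $\alpha_h \cdot \mathbbm{1} \geq d$ and $h \notin H_\beta$ actually vanish or reduce away: if $\deg(x^\gamma h) = \beta$ and $\alpha_h \cdot \mathbbm{1} \geq d$ while $\mathbbm{1} \in \mathrm{rowspan}(A)$ forces the total degree of $x^\gamma h$ to equal $d$, then $x^\gamma$ must be a constant and $h \in H_\beta$. Hence $\ker(\varphi)_\beta = \mathrm{Lift}(G) + \mathrm{span}_{\mathbb{K}} H_\beta$. Now $\mathrm{Lift}(G) \subseteq \ker(\varphi)_\beta$, and writing $R_\beta = \mathrm{Lift}(G) \oplus V_\beta$, I would project $H_\beta$ onto $V_\beta$ along $\mathrm{Lift}(G)$: the projections $\{\pi(h) : h \in H_\beta\}$ still generate $\ker(\varphi)$ together with $H \setminus H_\beta$, because replacing each $h$ by $h - (\text{element of } \mathrm{Lift}(G))$ only modifies the generating set by elements already in the ideal generated by $G \subseteq H \setminus H_\beta$. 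Since $\pi(h) \in V_\beta$ by construction, this produces a minimal homogeneous generating set whose degree-$\beta$ members are supported on $V_\beta$, and one checks minimality is preserved because the number of generators in each multidegree is an invariant (it equals $\dim_{\mathbb{K}} (\ker(\varphi)/\mathfrak{m}\ker(\varphi))_\beta$ by graded Nakayama), so no generator becomes redundant under this substitution.

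The main obstacle I anticipate is the bookkeeping around generators $h$ with multidegree $\beta' \notin \{\beta\}$ but whose total degree happens to be less than $d$: these are exactly the elements of $G$, and I must be careful that $\mathrm{Lift}(G)$ as defined (spanning $x^\gamma g$ with $\deg(x^\gamma) = \beta - \deg(g)$) genuinely captures all their contributions to $\ker(\varphi)_\beta$ — this is fine since any monomial multiple $x^\delta g$ landing in $R_\beta$ automatically satisfies $\deg(x^\delta) = \beta - \deg(g)$. A second, subtler point is justifying that we may take the coefficients $p_h$ in $f = \sum p_h h$ to be homogeneous; this follows from the fact that $\ker(\varphi)$ is a homogeneous ideal in the multigrading (Corollary \ref{cor:MaxGrading}), so each graded piece of $f$ is separately in the ideal and the degree-$\beta$ piece is a sum of $x^\gamma h$ terms of multidegree $\beta$. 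Finally I would remark that the practical upshot — the one actually used by the algorithm — is just the containment $\ker(\varphi)_\beta \cap V_\beta \supseteq$ "the new minimal generators", so that one solves the linear system $\varphi(f^{(\beta)}) = 0$ only for $f^{(\beta)}$ ranging over a basis of $V_\beta$ rather than all of $R_\beta$, shrinking the monomial support $M_\beta$ accordingly.
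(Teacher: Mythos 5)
Your proposal is correct and rests on the same decomposition $R_\beta = \mathrm{Lift}(G) \oplus V_\beta$ as the paper's proof, which simply takes a minimal generator $f$ of degree $\beta$, writes $f = f_G + f_{V_\beta}$, observes that $f_G \in \langle G \rangle \subseteq \ker(\varphi)$, and replaces $f$ by $f_{V_\beta}$. Your version is more elaborate---first establishing the global identity $\ker(\varphi)_\beta = \mathrm{Lift}(G) + \mathrm{span}_{\mathbb{K}} H_\beta$ and then invoking graded Nakayama to certify that replacing each $h \in H_\beta$ by its projection preserves minimality---but these just flesh out steps the paper leaves implicit, so the essential idea coincides.
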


\begin{proof}
    Suppose $f$ is a minimal generator of degree $\beta$. This polynomial can be rewritten as $f_G + f_{V_\beta}$ where $f_G \in \mathrm{Lift}(G)$ and $f_{V_\beta} \in V_\beta$. By definition, $f_G$ is in the ideal generated by $G$; hence, $f_G \in \ker(\varphi)$. It follows that $f_{V_\beta} \in \ker(\varphi)$, and this polynomial can be chosen as a minimal generator instead of $f$.
\end{proof}

\begin{example}
We illustrate \cref{prop:TrimMonomialBasis} by continuing our running example, $\mathrm{Gr}(2,4)$. We will compute minimal generators of $\ker(\varphi)$ of degree $(3,1,1,2,-1) = \deg(p_{3,4}) + \deg(f)$ where $f$ is the quadratic Pl{\"u}cker relation in $\ker(\varphi)$ from \cref{exa: grassmannian quadrics} and \cref{exa: grassmannian quadrics with multigrading}. It is well known that the Pl{\"u}cker relation forms a universal Gr{\"o}bner basis for this ideal; therefore, we should find that there are no minimal generators of degree $(3,1,1,2,-1)$. 

If we continue as before, we would compute the kernel of the matrix below.
\[
\begin{blockarray}{ccccccccccc}
\begin{block}{c(cccccccccc)}
p_{1,2}p_{3,4}^{2} & 0 & -1 & 1 & 0 & 2 & -2 & 0 & -1 & 1 & 0 \\
p_{1,3}p_{2,4}p_{3,4} & -1 & 0 & 1 & 1 & 1 & -1 & -1 & -1 & 0 & 1 \\
p_{2,3}p_{1,4}p_{3,4} & -1 & 1 & 0 & 1 & -1 & 1 & -1 & 0 & -1 & 1 \\
\end{block}
\end{blockarray}^T
\]
The rows are indexed by the 10 degree 6 monomials in $S$ which appear after applying $\varphi$ to the monomial basis of $R_{(3,1,1,2,-1)}$. The kernel is generated by $(1,-1,1)^T$, and it corresponds to $p_{3,4} f$. This is clearly not a minimal generator.

Instead of considering the monomial basis, we could have used the basis 
\[\{p_{3,4} f\}\cup \{p_{1,3}p_{2,4} p_{3,4}, p_{2,3}p_{1,4}p_{3,4}\}\] 
of $R_{(3,1,1,2,-1)}.$ Note that $\{p_{3,4}f\}$ is a basis for $\mathrm{Lift}(f)$ as in \cref{prop:TrimMonomialBasis} and the second set of monomials is a basis for $V_{(3,1,1,2,-1)}$. Since it is already evident that $p_{3,4} f \in \ker(\varphi)$, we only need to search for linear relations among $\varphi(p_{1,3}p_{2,4} p_{3,4})$ and $\varphi(p_{2,3}p_{1,4}p_{3,4})$. Of course, there are none since any such linear relation corresponds to an element of the kernel of the matrix above with the first column removed. Since this kernel is trivial, we see that there are no minimal generators of degree $(3,1,1,2,-1)$.
\end{example}

Lastly, we discuss an additional speed-up based on \cite{hollering2021identifiability, rosen2014computing} which uses the following proposition to throw out some homogeneous components that cannot have generators.

\begin{proposition}\cite{rosen2014computing}
\label{prop:AlgebraicMatroid}
    Let $\varphi : R = \Kk[x_1,\dotsc,x_n] \to S = \Kk[t_1,\dotsc,t_m] $ be a ring homomorphism, $J(\varphi)$ be the matrix 
    $J(\varphi)_{ij} = \left(\frac{\partial \varphi_j}{\partial t_i} \right)$, and $S \subset [n]$. Then
    \[
    \Kk[x_i ~|~ i \in S] \cap \ker(\varphi) = \langle 0 \rangle \iff \rank(J(\varphi)_S) = |S|
    \]
\end{proposition}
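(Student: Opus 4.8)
The plan is to reduce the biconditional to the classical Jacobian criterion for algebraic independence over a field of characteristic zero. First I would unwind the left-hand side. The ideal $\Kk[x_i \mid i \in S] \cap \ker(\varphi)$ is exactly the kernel of the restriction of $\varphi$ to the subring $\Kk[x_i \mid i \in S] \subseteq R$, i.e.\ of the homomorphism $x_i \mapsto \varphi(x_i)$ for $i \in S$. By the universal property of the polynomial ring $\Kk[x_i \mid i \in S]$, this kernel is precisely the ideal of all polynomial relations with coefficients in $\Kk$ satisfied by the tuple $\bigl(\varphi(x_i)\bigr)_{i \in S}$ of elements of $S = \Kk[t_1,\dots,t_m]$. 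Hence $\Kk[x_i \mid i \in S] \cap \ker(\varphi) = \langle 0 \rangle$ if and only if $\{\varphi(x_i) \mid i \in S\}$ is \emph{algebraically independent} over $\Kk$.

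Next I would invoke the Jacobian criterion: since $\Kk = \Cc$ has characteristic zero, a finite set $\{f_1,\dots,f_s\} \subseteq \Kk[t_1,\dots,t_m]$ is algebraically independent over $\Kk$ if and only if the Jacobian matrix $\bigl(\partial f_j/\partial t_i\bigr)_{i\in[m],\,j\in[s]}$ has rank $s$. Applying this with $\{f_1,\dots,f_s\} = \{\varphi(x_i) \mid i \in S\}$, so that $s = |S|$, the relevant Jacobian matrix is exactly the column-submatrix $J(\varphi)_S$ of $J(\varphi)$; here I would record the ordering convention of the statement, namely that the rows of $J(\varphi)$ are indexed by the $t_i$ and its columns by the $x_j$, so that restricting to $S$ selects the columns indexed by $S$ and $\rank J(\varphi)_S \le |S|$ always. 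Chaining the two equivalences yields the asserted equivalence.

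The only ingredient that is not purely formal is the Jacobian criterion itself, and that is where I expect the real content to sit --- although, since the proposition is stated with a citation to \cite{rosen2014computing}, the intended proof simply quotes it. For completeness I would sketch the criterion as follows. For the direction ``dependent $\Rightarrow$ rank deficient'', choose a nonzero $P \in \Kk[x_j \mid j \in S]$ of minimal total degree with $\varphi(P) = 0$; applying $\partial/\partial t_i$ to the identity $\varphi(P) = 0$ and using the chain rule gives, for every $i \in [m]$, the relation $\sum_{j \in S} \varphi\!\left(\partial P/\partial x_j\right)\, J(\varphi)_{ij} = 0$, so the vector $v = \bigl(\varphi(\partial P/\partial x_j)\bigr)_{j \in S}$ lies in the null space of $J(\varphi)_S$. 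Because $\mathrm{char}\,\Kk = 0$ and $P$ is nonconstant, some $\partial P/\partial x_j$ is nonzero of strictly smaller degree than $P$, hence not in the kernel of the restricted map by minimality of $P$; thus $v \neq 0$ and $\rank J(\varphi)_S < |S|$. The converse implication is the genuinely characteristic-zero (separability) statement; rather than reprove it I would cite a standard reference for the Jacobian criterion. Granting the criterion, the proposition is immediate from the two reductions above.
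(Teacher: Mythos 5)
Your reduction is correct and is the standard argument. The paper itself gives no proof for this proposition --- it simply cites \cite{rosen2014computing} --- and your two-step reduction (first identifying $\Kk[x_i \mid i \in S] \cap \ker(\varphi) = \langle 0 \rangle$ with algebraic independence of $\{\varphi(x_i) \mid i \in S\}$ over $\Kk$, then invoking the characteristic-zero Jacobian criterion for algebraic independence) is exactly the reasoning the cited reference relies on. Your sketch of the ``dependent $\Rightarrow$ rank deficient'' direction via a minimal-degree relation and the chain rule is correct, and you are right that the converse is the genuinely characteristic-zero statement that one would quote rather than reprove.
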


\begin{remark}
For those familiar with matroid theory, the previous proposition essentially states that the \emph{algebraic matroid} defined by the prime ideal $\ker(\varphi)$ is the same as the linear matroid defined by $J(\varphi)$ over the fraction field $\Kk(t_1, \ldots t_m)$. For a more detailed discussion of different cryptomorphic constructions of algebraic matroids we refer the reader to \cite{hollering2021identifiability, rosen2014computing}. 
\end{remark}

Now suppose we want to compute the degree $\beta$ homogeneous component of $\ker(\varphi)$. Let $S \subseteq \{x_1, \ldots, x_n \}$ be the subset of the variables which $M_\beta$ is supported on. Then by \cref{prop:AlgebraicMatroid},
if $\rank(J(\phi)_S) = |S|$, then there are no generators in $\ker(\phi)$ whose support is $S$. This immediately implies the following corollary. 

\begin{corollary}
\label{cor:MatroidComponentSkip}
Let $\varphi : R = \Kk[x_1,\dotsc,x_n] \to S = \Kk[t_1,\dotsc,t_m] $ be a ring homomorphism, $J(\varphi)$ be the matrix 
$J(\varphi)_{ij} = \left(\frac{\partial \varphi_j}{\partial t_i} \right)$. Let $M_\beta$ be a monomial basis for the homogeneous component of degree $\beta$ of $\ker(\varphi)$ and $S \subset [n]$ correspond to the subset of variables on which $M_\beta$ is supported. If $\rank(J(\varphi)_S) = |S|$ then there are no generators of degree $\beta$ in $\ker(\varphi)$
\end{corollary}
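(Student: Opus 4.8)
The plan is to read this off \cref{prop:AlgebraicMatroid} almost immediately; the only substantive observation needed is that being homogeneous of multidegree $\beta$ forces a polynomial to lie in the coordinate subring generated by the variables that appear in $M_\beta$.

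First I would fix conventions. Here $M_\beta$ is the monomial basis of the graded piece $R_\beta = \{f \in R \mid \deg(f) = \beta\}$ (as in \cref{prop:TrimMonomialBasis}), and $S \subseteq [n]$ is precisely the set of indices $i$ for which $x_i$ divides at least one monomial of $M_\beta$. With this setup, every monomial of multidegree $\beta$ lies in $\Kk[x_i \mid i \in S]$, hence so does every $\Kk$-linear combination of such monomials, i.e. $R_\beta \subseteq \Kk[x_i \mid i \in S]$. Intersecting with the kernel,
\[
R_\beta \cap \ker(\varphi) \;\subseteq\; \Kk[x_i \mid i \in S] \cap \ker(\varphi).
\]

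Next I would apply \cref{prop:AlgebraicMatroid} to exactly this $S$: the hypothesis $\rank(J(\varphi)_S) = |S|$ gives $\Kk[x_i \mid i \in S] \cap \ker(\varphi) = \langle 0 \rangle$, so the inclusion above forces $R_\beta \cap \ker(\varphi) = 0$. A minimal generator of $\ker(\varphi)$ of degree $\beta$ would in particular be a nonzero element of this component, so no such generator exists, which is the claim. (The same argument shows the slightly stronger statement that $\ker(\varphi)$ has no nonzero element of degree $\beta$ at all, so this homogeneous component can simply be skipped by the algorithm.)

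There is no genuine obstacle here; \cref{cor:MatroidComponentSkip} is a bookkeeping consequence of \cref{prop:AlgebraicMatroid}. The only points needing a moment of care are making sure the set $S$ extracted from the support of $M_\beta$ is literally the one fed into \cref{prop:AlgebraicMatroid}, and reading ``supported on $S$'' as ``every degree-$\beta$ monomial uses only variables indexed by $S$'', which is what makes $R_\beta \subseteq \Kk[x_i \mid i \in S]$ a direct consequence of the definitions rather than something depending on which monomials actually occur in kernel elements.
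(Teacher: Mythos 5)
Your proof is correct and follows exactly the paper's argument: identify $S$ as the variable support of the degree-$\beta$ monomial basis $M_\beta$ of $R_\beta$, observe $R_\beta \subseteq \Kk[x_i \mid i \in S]$, and invoke \cref{prop:AlgebraicMatroid} to conclude $R_\beta \cap \ker(\varphi) = 0$. Your explicit note that ``$M_\beta$ is the monomial basis of $R_\beta$'' (rather than of a component of the ideal, as the statement's wording might suggest) is the right reading and matches the usage in \cref{prop:TrimMonomialBasis}.
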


What makes the above corollary extremely powerful for the purpose of the task at hand is the observation from \cite{rosen2014computing}, that if one plugs in random values for the variables $t_j$ into $J(\varphi)$, then \cref{prop:AlgebraicMatroid} still holds with probability 1. This means that for the purposes of our algorithm, we can simply compute the matrix $J(\varphi)$ and then substitute in random values for the parameters $t_j$. This allows us to skip over many components which can never yield generators by simply computing the rank of $J(\varphi)_S$ which is a $m \times |S|$ matrix with entries in $\Kk$ which is extremely cheap compared to the time it takes to evaluate $\varphi$ on $M_\beta$. Also, we note that when plugging in random values for the $t_j$, the rank of $J(\varphi)_S$ can only drop. This means in our algorithm we would just unnecessarily compute the component of $\ker(\varphi)$ of degree $\beta$. Thus even though we are leveraging some numerical speed-ups, the output is always still correct. In the next section we will show how effective this step can be at reducing the total computation time on several large examples.

We end the section with Algorithm \ref{alg:componentsOfKernel} which naturally arises from the above discussion. Observe that one major advantage that this algorithm has over other approaches comes from parallelization. The inner loop in Algorithm \ref{alg:componentsOfKernel} below which runs over all multidegrees $\beta$ that correspond to degree $d$ monomials is \emph{embarassingly parallel}. This means that massive speedups can be achieved if the algorithm is run in parallel on a large cluster. In our last section we showcase how effective this algorithm can be on some difficult examples from phylogenetics. 

\begin{remark}
    Throughout the latter part of this section, we assumed that $\mathbbm{1}$ was in the row-space of $A$. If instead we just assumed that the grading were positive i.e. there is a vector $\bfa = (a_1,\dotsc,a_n) \in \Zz_{>0}^n$ in the row-space of $A$, you can still construct an algorithm similar to Algorithm 1. The only difference would be to replace $P_i = \{\beta \in \Nn A ~|~ \beta = A \alpha, ~ \alpha \cdot \mathbbm{1} = i\}$ with the set $P_i' = \{\beta \in \Nn A ~|~ \beta = A \alpha, ~ \alpha \cdot \bfa = i\}$.
\end{remark}

\begin{algorithm}
\Input{A polynomial map $\varphi: R = \Kk[x_1, \ldots, x_n] \to S = \Kk[t_1, \ldots t_m]$ and a total degree $d$}
\Output{All minimal generators of $\ker(\varphi)$ of total degree at most $d$}

Compute the homogeneity space of $\langle x_i - \varphi(x_i) \rangle$ and set $A \in \Zz^{r \times n}$ to be the associated multigrading on $\ker(\varphi)$\\
Set $G := \{\}$ to be the set of computed minimal generators\\
Set $B := \{\}$ to be the list of monomial bases for each multidegree\\
Set $J := J(\varphi)$ and substitute random values for the $t_j$

\For{$i = 1$ \KwTo $d$}{

    Set $P_i := \{\beta \in \Nn A ~|~ \beta = A \alpha, ~ \alpha \cdot \mathbbm{1} = i\}$
    
    \For{$\beta \in P_i$}{

        Compute the monomial basis $M_\beta$ for $R_\beta$ and set $S$ to be the variable support of $M_B$
        
        \If{$\rank(J(\varphi)_S) = |S|$}{
            \textbf{continue}
        }
    
         Compute a basis for $V_\beta$ where $R_\beta = \mathrm{Lift}(G) \oplus V_\beta$ using Proposition \ref{prop:TrimMonomialBasis} 

        Expand $\displaystyle \varphi(f^{(\beta)})$ and extract the linear system $L_\beta c = 0$ where $c = (c_\alpha)$ which is obtained by setting $\varphi(f^{(\beta)}) = 0$.

        Compute a basis $v^{(1)}, v^{(2)}, \ldots, v^{(\ell)} \in \Kk^{M_\beta}$ for $\ker(L_\beta)$

        $G = G \cup \{\sum_{\alpha \in M_\beta} v_\alpha^{(i)} x^\alpha ~|~ i \in [\ell] \}$
    }
}
\Return{$G$}
\caption{\texttt{componentsOfKernel}}
\label{alg:componentsOfKernel}
\end{algorithm}

\section{Applications to Algebraic Statistics and Phylogenetics}
\label{sec:PhyloApplications}
In this section we apply Algorithm \ref{alg:componentsOfKernel} to find low-degree minimal generators for several examples in algebraic statistics which come from mathematical phylogenetics. These examples have been previously shown to be extremely difficult and Gr\"obner basis algorithms typically do not terminate when applied to them even when degree-limiting is utilized \cite{martin2023algebraic}. In all of the cases which we describe below, we have used the Macaulay2 implementation of our algorithm which is not parallelized since this functionality. This means that the main advantage of this technique is not being fully leveraged in the below examples. Despite this, the algorithm still performs extremely well. 
All of the code for constructing the polynomial maps below can be found at our MathRepo page \cite{mathrepo}.

\subsection{The General Markov Model on a Phylogenetic Tree}
In this subsection we provide a very brief overview of phylogenetic Markov models and the general Markov model. Since our main purpose here is to simply showcase the effectiveness of this algorithm on some notoriously difficult polynomial maps, we do not provide significant detail or background on phylogenetics and describe the polynomial maps involved primarily from an algebraic perspective. For a more detailed discussion on phylogenetics we refer the reader to \cite{steel2016phylogeny, algstat}.

A $\kappa$-state phylogenetic Markov model on a $n$-leaf, leaf-labelled rooted binary tree $T$ is a \emph{directed acyclic graphical model} in which all of the internal nodes are hidden. The model produces a joint distribution on all possible joint states $(i_1, \ldots, i_n) \in [\kappa]^n$ which can be observed at the leaves of $T$. This distribution is determined by associating a $\kappa$-state random variable $X_v$ to each internal vertex $v$ of $T$ and a $\kappa \times \kappa$ transition matrix $M^e$ to each directed edge $e = (u,v)$ of $T$ such that $M_{i,j}^e = P(X_v = j | X_u = i)$. A root distribution $\pi$ for the root $\rho$ of $T$ is also needed. Then the probability of observing
$(i_1, \ldots i_n) \in [\kappa]^n$ of states at the leaves is 
\[
p_{i_1 \ldots i_n} = P(X_1 = i_1, \ldots, X_n = i_n) ~= 
\sum_{j \in [\kappa]^{Int(T)}}\pi_{j_\rho}\prod_{(u,v) \in E(T)}M_{j_u, j_v}^{(u,v)}. 
\]
which as we can see is a polynomial expression in the parameters $M_{i,j}^e$ and $\pi_k$. This means the model can essentially be viewed as the image of a polynomial map, and thus the vanishing ideal of the model is the kernel of the map below.
\begin{align}
\label{eqn:GMM_TreeParam}
    \psi_T : \Kk[p_{i_1 \ldots i_n} ~|~ (i_1, \ldots, i_n) \in [\kappa]^n] &\to \Kk[M^e_{i,j}, \pi_k ~|~ e \in E(T), i,j,k \in [\kappa]] \\
    p_{i_1 \ldots i_n} &\mapsto \sum_{j \in [\kappa]^{Int(T)}}\pi_{j_\rho}\prod_{(u,v) \in E(T)}M_{j_u, j_v}^{(u,v)} \nonumber
\end{align}
If no other restrictions are made on the transition matrices $M^e$ and the root distribution $\pi$, then resulting phylogenetic model is called the \emph{general Markov model} \cite{allman2008gmm}. For any algebraic phylogenetic model, $\psi_T$, the kernel $\psi_T$, denoted $I_T$, is often called the \emph{ideal of phylogenetic invariants} of the model. The number of variables involved here grows exponentially in the number of leaves $n$ of the tree $T$. This means for large trees it is often impossible to compute the kernel of $\psi_T$ with standard methods. 

Finding a complete set of generators for $\ker(\psi_T)$ when $n = 3$ and $\kappa = 4$ is still an open question, though the \emph{Salmon Conjecture} \cite{allman2003invariants, allman2008gmm} contains a conjectural set of generators which have been shown to define the model set theoretically \cite{friedlan2012salmon}. Further numerical evidence has also been found in \cite{bates2011salmon}.

We tried to find all degree 5 polynomials in the kernel which are known with our \texttt{Macaulay2} implementation of Algorithm 1. In this case there are $\binom{64+5-1}{5} = 10424128$ monomials which yield a total of 175616 unique multidegrees. While our current Macaulay2 implementation was able to compute some components, our current estimate is that it would take approximately 130 hours to compute all components, but it typically runs out of RAM. Based on our current benchmarks, we expect these issues to be solved by our OSCAR implementation. We end this section with a short application of our algorithm to the easier problem of when $\kappa = 3$. 

\begin{example}\label{exa:salmon}
    When $\kappa = 3$, it is known that the $\ker(\psi_T)$ is cut out by 27 quartics \cite{Pachter-Sturmfels04}. We were able to verify that there are indeed 27 minimal quartics using our unparallelized Macaulay2 implementation in 29.76 seconds. We also tried to verify this using Gr\"obner bases; however, we killed this computation after an 76 minutes. 
\end{example}

\subsection{The K3P Model on a Phylogenetic Network}
Another well studied family of phylogenetic models are \emph{group-based models}. These models have been studied extensively from an algebraic perspective \cite{draisma2009equivariant, evans1993invariants, hendy1996complete, mateusz2011geometry, sturmfels2005toric} and many algebraic problems are well understood including a complete description of the Gr\"obner basis for the vanishing ideal of the model \cite{sturmfels2005toric}. This is because these models allow for a linear change of coordinates \cite{evans1993invariants, hendy1996complete} in which the parameterization of the model becomes a monomial map and thus the vanishing ideal becomes toric \cite{sturmfels2005toric}. While group-based models on trees are relatively well understood, more interest recently in phylogenetics has been focused on \emph{phylogenetic networks} which will be our main focus in this subsection. We begin with a description of the monomial parameterization for trees since this will be used to define the network parameterization. 

In a group-based model, the states of the random variables involved are identified with the elements of a finite abelian group $G$. This allows a simultaneous coordinate change on both the domain and codomain of $\psi_T$ which essentially comes from applying to the discrete Fourier transform to the expression for the joint probabilities Equation \ref{eqn:GMM_TreeParam}. For a more detailed explanation of this coordinate change we refer the reader to \cite[Chapter 15]{algstat} and instead focus on defining the polynomial map in this new coordinate system which is what we will run our algorithm on. 

The transformed coordinates of the domain of $\psi_T$ are denoted by $q_{g_1 \ldots g_n}$ and are typically called the \emph{Fourier coordinates}. We then have new parameters $a_g^e$ for each edge $e \in E(T)$ and $g \in G$. Since $T$ is a tree, removing any edge $e$ of $T$ naturally induces a partition of the leaf set into two connected components which is called a \emph{split} of $T$ and is denoted by $A_e | B_e$. The parameterization of the model in these coordinates is given by
\begin{equation}
\label{eqn:FourierTreeParam}
q_{g_1,\ldots g_n} = 
\begin{cases}
\prod_{e \in E(T)} a_{\sum_{i \in A_e} g_i}^e & \mbox{ if } \sum_{i \in [n]}g_i = 0 \\ 
0 & \mbox{ otherwise}
\end{cases}
\end{equation}

Many well known phylogenetic models are group-based such as the Cavendar-Farris-Neyman model, the Jukes-Cantor model, the Kimura 2-Parameter model, and the Kimura 3-Parameter (K3P) model which is typically the most difficult to compute and will be our main object of interest later in this subsection. As discussed previously, group-based models on trees are relatively well understood but many open questions remain. The simplest type of network from an algebraic perspective is called a \emph{sunlet network} and was first introduced in \cite{gross2018distinguishing} and further studied algebraically in \cite{cummings2021invariants}.

\begin{definition}
    A $n$-\emph{sunlet network} is a semi-directed graph with a distinguished vertex called the \emph{reticulation vertex} and whose underlying graph is obtained by adding a leaf to every vertex of a $n$-cycle and then directing the non-leaf edges which are adjacent to the reticulation vertex towards it. 
\end{definition}

\begin{figure}
    \centering
    \begin{subfigure}[b]{0.3\linewidth}
        \centering
        \begin{tikzpicture}[scale = .5, thick]
        \draw [dashed] (2,2)--(4,2);
        \draw (4,2)--(4,4);
        \draw (4,4)--(2,4);
        \draw [dashed] (2,4)--(2,2);
        
        \draw (2,2)--(1,1);
        \draw (4,2)--(5,1);
        \draw (4,4)--(5,5);
        \draw (2,4)--(1,5);
        
        \draw (1,1) node[below]{$1$};
        \draw (5,1) node[below]{$4$};
        \draw (5,5) node[above]{$3$};
        \draw (1,5) node[above]{$2$};
        
        \draw (1,1) node[right]{$e_1$};
        \draw (5,1) node[left]{$e_4$};
        \draw (5,5) node[left]{$e_3$};
        \draw (1,5) node[right]{$e_2$};
        
        \draw (3,2) node[below]{$e_8$};
        \draw (4,3) node[right]{$e_7$};
        \draw (3,4) node[above]{$e_6$};
        \draw (2,3) node[left]{$e_5$};
        \end{tikzpicture}
        \caption{$\cs_4$}
    \end{subfigure}
    \begin{subfigure}[b]{0.3\linewidth}
        \begin{tikzpicture}[scale = .5, thick]
        \draw (0,0)--(2,2);
        \draw (2,2)--(4,2);
        \draw (4,2)--(6,0);
        \draw (2,2)--(1,3);
        \draw (4,2)--(5,3);
        
        \draw [fill] (0,0) circle [radius = .1];
        \draw [fill] (1,1) circle [radius = .1];
        \draw [fill] (2,2) circle [radius = .1];
        \draw [fill] (1,3) circle [radius = .1];
        \draw [fill] (4,2) circle [radius = .1];
        \draw [fill] (5,3) circle [radius = .1];
        \draw [fill] (5,1) circle [radius = .1];
        \draw [fill] (6,0) circle [radius = .1];
        
        \draw (0,0) node[below left] {$1$};
        \draw (1,3) node[above left] {$2$};
        \draw (5,3) node[above right] {$3$};
        \draw (6,0) node[below right] {$4$};
        
        \draw (.25,.25) node[right] {$e_1$};
        \draw (1.25,2.75) node[right] {$e_2$};
        \draw (4.75,2.75) node[left] {$e_3$};
        \draw (5.75,.25) node[left] {$e_4$};
        \draw (1.25,1.25) node[right] {$e_5$};
        \draw (3,2) node[below] {$e_6$};
        \draw (4.75,1.25) node[left] {$e_7$};
        \end{tikzpicture}
        \caption{$T_0$}
    \end{subfigure}
    \begin{subfigure}[b]{0.3\linewidth}
        \begin{tikzpicture}[scale = .5, thick]
        
        \draw (0,0)--(2,2);
        \draw (2,2)--(4,2);
        \draw (4,2)--(6,0);
        \draw (2,2)--(1,3);
        \draw (4,2)--(5,3);
        
        \draw [fill] (0,0) circle [radius = .1];
        \draw [fill] (1,1) circle [radius = .1];
        \draw [fill] (2,2) circle [radius = .1];
        \draw [fill] (1,3) circle [radius = .1];
        \draw [fill] (4,2) circle [radius = .1];
        \draw [fill] (5,3) circle [radius = .1];
        \draw [fill] (5,1) circle [radius = .1];
        \draw [fill] (6,0) circle [radius = .1];
        
        \draw (0,0) node[below left] {$1$};
        \draw (1,3) node[above left] {$4$};
        \draw (5,3) node[above right] {$2$};
        \draw (6,0) node[below right] {$3$};
        
        \draw (.25,.25) node[right] {$e_1$};
        \draw (1.25,2.75) node[right] {$e_4$};
        \draw (4.75,2.75) node[left] {$e_2$};
        \draw (5.75,.25) node[left] {$e_3$};
        \draw (1.25,1.25) node[right] {$e_8$};
        \draw (3,2) node[below] {$e_7$};
        \draw (4.75,1.25) node[left] {$e_6$};
        \end{tikzpicture}
        \caption{$T_1$}
    \end{subfigure}
    \caption{A 4 leaf sunlet network $N$ and the two trees $T_0$ and $T_1$ that are obtained by deleting the reticulation edges $e_8$ and $e_5$ respectively.}
    \label{fig:SunletAndTrees}
\end{figure}
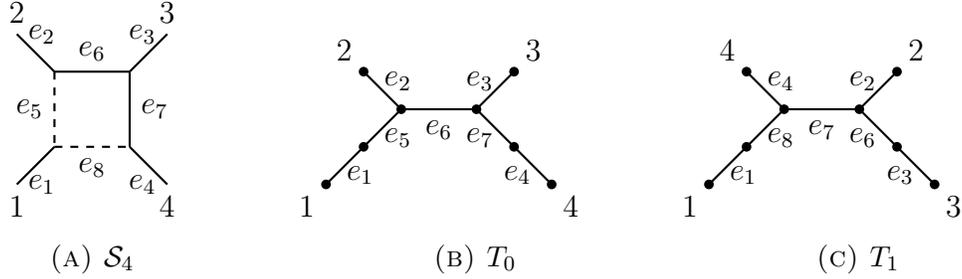

The two directed edges which point into the reticulation vertex are often called \emph{reticulation edges} and are drawn as dotted edges instead of directed edges since they are implicitly directed toward the vertex at which they meet. This is illustrated in Figure \ref{fig:SunletAndTrees}. Observe that deleting either of the reticulation edges from the sunlet network yields a tree. These underlying trees are used to construct the parameterization of the network model. For any phylogenetic model $\psi_T$ which is defined for trees, it is naturally extended to a sunlet network $N$ by defining
\[
\psi_N = \lambda \psi_{T_0} + (1-\lambda) \psi_{T_1}
\]

We now focus on the concrete problem of computing the ideal of phylogenetic invariants for a 4-leaf sunlet network $N_4$ under the K3P model. The K3P model is the generic group-based model for the group $G = \Zz_2 \times \Zz_2$. This means for each edge of the network $N_4$ and each $g \in \Zz_2 \times \Zz_2$ we have a parameter $a_g^e$. The parameterization $\psi_{N_4}$ is then given by
\[
q_{g_1, g_2, g_3, g_4} \mapsto \begin{cases}
a_{g_1}^1 a_{g_2}^2 a_{g_3}^3 a_{g_4}^4 a_{g_1}^5 a_{g_1 + g_2}^6 a_{g_4}^7 +
a_{g_1}^1 a_{g_2}^2 a_{g_3}^3 a_{g_4}^4 a_{g_3}^6 a_{g_1 + g_4}^7 a_{g_1}^8 
& \mbox{ if } \sum_{i \in [4]}g_i = 0 \\ 
0 & \mbox{ otherwise}
\end{cases}
\]
Since in this case $G = \Zz_2 \times \Zz_2$, there are a total of $|G|^4 = 4^{4-1} = 64$ variables in the domain of $\psi_{N_4}$ and $4 \times 8 = 32$ parameters; however, by exploiting the fact the associated map of varieties is actually of the form $\psi_T: \prod_{i = 1}^8 \Pp^3 \to \Pp^{63}$, one can naturally reparameterize with only $8 \times (4 - 1) + 1 = 25$ parameters. This means that in total the elimination ideal will be in $89$ variables. Recently, the authors of \cite{martin2023algebraic} attempted to find all generators up to total degree 3 in $\ker(\psi_{N_4})$ using standard Gr\"obner basis algorithms in Macaulay2 with degree-limiting. They were able to find all degree 2 generators however after 100 days the Gr\"obner basis algorithm still did not terminate to provide all degree 3 generators. 

We ran our Macaulay2 implementation of Algorithm \ref{alg:componentsOfKernel} which has no parallelization features on a MacBook Pro with an Apple M2 chip and 16 GB of RAM. It takes slightly over 8 minutes for Algorithm \ref{alg:componentsOfKernel} to produce all minimal generators of $\ker(\psi_{N_4})$ of total degree at most 3. We also ran this computation without the speed-up from \cref{cor:MatroidComponentSkip}. For the degree 2 generators the computation time was quite similar however for the degree 3 generators the computation took approximately 30 minutes instead of 8. The final results are summarized in the following theorem. 

\begin{theorem}
\label{thm:K3P_Gens4}
The ideal of phylogenetic invariants $I_{N_4} = \ker(\psi_{N_4})$ for K3P model on a four leaf sunlet network has $12$ minimal quadratic and $64$ minimal cubics generators. 
\end{theorem}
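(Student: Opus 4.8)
Since \cref{thm:K3P_Gens4} is a statement about an explicit ideal, the plan is to prove it by running Algorithm~\ref{alg:componentsOfKernel} on $\psi_{N_4}$ with input total degree $d=3$ and recording its output. Correctness of that algorithm — which is exactly \cref{thm: grading the kernel}, \cref{cor:MaxGrading}, \cref{prop:TrimMonomialBasis}, and \cref{cor:MatroidComponentSkip} assembled together — guarantees that the returned set $G$ is precisely the collection of minimal homogeneous generators of $\ker(\psi_{N_4})$ of total degree at most $3$, so the theorem reduces to the assertions that $|\{g \in G : \deg g = 2\}| = 12$, that $|\{g \in G : \deg g = 3\}| = 64$, and that $G$ has no element of degree $1$.

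First I would set up the ring map concretely: as explained above, using the product-of-projective-spaces structure of the parameter space, $\psi_{N_4}$ is presented as a homomorphism from $\Kk[q_{g_1 g_2 g_3 g_4} : \sum_i g_i = 0]$ in $64$ variables to a polynomial ring in $25$ parameters, and since reparameterizing leaves the image variety unchanged it leaves the kernel unchanged, so we may work with the resulting elimination ideal $J$ in $89$ variables. Next, following \cref{lemma:homogeneitySpace}, I would compute $C_0(J)$ not via a Gröbner basis but by solving the linear conditions on $\omega \in \Rr^{89}$ that force $\mathrm{in}_\omega$ to fix each generator $q_\gamma - \psi_{N_4}(q_\gamma)$; intersecting these realizes $C_0(J)$ as the kernel of an explicit integer matrix, and by \cref{lemma:MaximalMultigrading} and \cref{cor:MaxGrading} a lattice basis, projected onto the $q$-coordinates, gives the essentially maximal grading matrix $A$ on $\ker(\psi_{N_4})$. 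One then checks $\mathbbm{1} \in \mathrm{rowspan}(A)$, so the $A$-multigrading refines total degree and the final loop of the algorithm applies.

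Then I would run the main loop. For $i=2$: enumerate the multidegrees $\beta = A\alpha$ with $\alpha \cdot \mathbbm{1} = 2$; for each, form the monomial basis $M_\beta$ of $R_\beta$, apply the algebraic-matroid test of \cref{cor:MatroidComponentSkip} to the Jacobian $J(\psi_{N_4})$ with random values substituted for the parameters (this can only lower $\rank(J(\psi_{N_4})_S)$ and so never discards a component that actually carries a generator), discard the full-rank components, and on each survivor solve the linear system obtained from $\psi_{N_4}(f^{(\beta)}) = 0$ over $\Qq$. The union of the kernel bases over all such $\beta$ is the set of $12$ quadratic minimal generators. For $i=3$: proceed identically, except that in each surviving component one first replaces $M_\beta$ by a basis of the complement $V_\beta$ of $\mathrm{Lift}(G)$ in $R_\beta$, with $G$ the $12$ quadrics already found, as in \cref{prop:TrimMonomialBasis}; the resulting smaller systems contribute the $64$ cubic generators, and the $\mathrm{Lift}$ construction certifies that these $64$ polynomials are genuinely new, i.e.\ not in the ideal generated by the quadrics.

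The obstacle here is not conceptual — every ingredient is already proved — but one of computational scale: the $i=3$ stage must range over all multidegrees arising from degree-$3$ monomials in $64$ variables and build and solve a linear system for each survivor, and it is precisely the combination of the fine multigrading, the matroid skip of \cref{cor:MatroidComponentSkip}, and the basis trimming of \cref{prop:TrimMonomialBasis} that brings this within reach (about $8$ minutes, versus $\approx 30$ without the matroid step, on the hardware described). Because the linear algebra is carried out exactly, the counts $12$ and $64$ are rigorous, and exhaustiveness in degrees $\le 3$ — the only part that could in principle be missed — is forced by the correctness of Algorithm~\ref{alg:componentsOfKernel}.
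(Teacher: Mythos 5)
Your proposal is correct and takes essentially the same route as the paper: Theorem~\ref{thm:K3P_Gens4} has no standalone proof in the text but is justified exactly as you describe, by running Algorithm~\ref{alg:componentsOfKernel} on $\psi_{N_4}$ up to degree $3$ (using the $25$-parameter reparameterization, the homogeneity space computed via \cref{lemma:homogeneitySpace}, the matroid skip of \cref{cor:MatroidComponentSkip}, and the trimming of \cref{prop:TrimMonomialBasis}), with correctness of the returned generator counts delegated to the preceding results of Section~\ref{sec:MainAlg}. The only small caveat is that the algorithm returns \emph{a} minimal generating set in each multidegree rather than \emph{the} collection of minimal generators, but the counts $12$ and $64$ are the graded Betti numbers $\beta_{0,2}$ and $\beta_{0,3}$ and are therefore intrinsic, so the conclusion is unaffected.
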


We were actually able to compute all minimal degree 2 generators for 5-leaf sunlets as well. In this case $\psi_{N_5}$ maps from a ring in $256$ variables into a ring with $31$ variables so the elimination ideal is in $287$ variables total. Despite this our algorithm is still able to compute all degree 2 generators in only 25 minutes and with parallelization could compute all degree 3 generators as well based on our current benchmarking. As one can see, Algorithm \ref{alg:componentsOfKernel} can scale to extremely large numbers of variables provided that the generators of interest are of low total degree and the map is homogeneous in a reasonably fine multigrading. The results are summarized in the following theorem and broken down in Table \ref{table:K3P_Gens} below.

\begin{theorem}
\label{thm:K3P_Gens5}
There are 648 minimal quadratic invariants of the K3P model on a 5-leaf sunlet network.  
\end{theorem}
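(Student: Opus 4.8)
This statement, like \cref{thm:K3P_Gens4}, records the output of \cref{alg:componentsOfKernel} run on the map $\psi_{N_5}$ with degree bound $d=2$, so the ``proof'' is a verification that the algorithm executes correctly on this input and that each of its steps is licensed by the results established above. First I would write down $\psi_{N_5}$ explicitly: its domain is the polynomial ring on the $4^{5-1}=256$ Fourier coordinates $q_{g_1\cdots g_5}$ with $g_i\in\Zz_2\times\Zz_2$ and $\sum_i g_i=0$, and after the projective reparameterization its codomain is a ring in $10\cdot(4-1)+1=31$ parameters (ten edges of the $5$-sunlet, three free Fourier parameters per edge, one mixing parameter $\lambda$), so the elimination ideal $J$ sits in $287$ variables, exactly as claimed.

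Next, by \cref{lemma:homogeneitySpace} the computation of $C_0(J)$ is pure linear algebra: one solves for all $\omega\in\Rr^{287}$ with $\mathrm{in}_\omega(q_{g_1\cdots g_5}-\psi_{N_5}(q_{g_1\cdots g_5}))=q_{g_1\cdots g_5}-\psi_{N_5}(q_{g_1\cdots g_5})$ for every generator, picks an integer basis, projects onto the $256$ coordinates indexed by the $q$'s, and assembles the results into a matrix $A$. By \cref{thm: grading the kernel} and \cref{cor:MaxGrading}, $\ker(\psi_{N_5})$ is homogeneous in the multigrading given by $A$, and one checks that $\mathbbm{1}$ lies in the row span of $A$ so that this grading refines total degree and \cref{alg:componentsOfKernel} applies. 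The abundant symmetry of a group-based parameterization makes $A$ rather fine, which is exactly what keeps the ensuing linear systems small.

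Then I would run the $i=1$ and $i=2$ passes of \cref{alg:componentsOfKernel}. The $i=1$ pass collects any linear generators (if present) into $G$; the $i=2$ pass loops over the multidegrees $\beta=A\alpha$ with $\alpha\cdot\mathbbm{1}=2$, and for each one computes the monomial basis $M_\beta$ of $R_\beta$ together with its variable support $S$, invokes \cref{cor:MatroidComponentSkip} by evaluating the Jacobian $J(\psi_{N_5})$ at a random parameter point and discarding $\beta$ whenever $\rank(J(\psi_{N_5})_S)=|S|$, trims the surviving basis by $\mathrm{Lift}(G)$ via \cref{prop:TrimMonomialBasis} to get $V_\beta$, forms the generic polynomial $f^{(\beta)}$ supported on $V_\beta$, expands $\psi_{N_5}(f^{(\beta)})$, reads off the linear system $L_\beta c=0$ by collecting coefficients of the parameter monomials, and records a basis of $\ker(L_\beta)$. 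Because minimal generators may be taken homogeneous for the multigrading, there is no interaction between distinct multidegrees, and the total count is $\sum_\beta\dim\ker(L_\beta)$ over all degree-$2$ multidegrees $\beta$; carried out in unparallelized Macaulay2 this takes about $25$ minutes and returns $648$. The code is available at \cite{mathrepo}.

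The main obstacle is scale rather than any conceptual subtlety. A single generic quadric in $256$ variables already has $\binom{257}{2}=32896$ coefficients, and its image under $\psi_{N_5}$ is supported on a huge number of degree-$4$ monomials in the $31$ parameters, so the monolithic strategy of \cref{eqn:GenericTotalDegPoly} is hopeless here; the whole point is that the multigrading breaks this into many tiny independent systems and that \cref{cor:MatroidComponentSkip} discards the overwhelming majority of multidegrees that support no generator. Correctness of this discarding step --- and hence of the final number --- rests on the fact from \cite{rosen2014computing} that a random specialization of $J(\psi_{N_5})$ realizes the algebraic matroid of $\ker(\psi_{N_5})$ with probability $1$, combined with the observation that specialization can only decrease the rank, so no component carrying a generator is ever thrown away; the reported count is therefore correct for any choice of the random point.
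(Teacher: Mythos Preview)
Your proposal is correct and matches the paper's approach exactly: the paper offers no formal proof of \cref{thm:K3P_Gens5} beyond reporting it as the output of \cref{alg:componentsOfKernel} on $\psi_{N_5}$, with the supporting data recorded in Table~\ref{table:K3P_Gens} (grading rank $16$, $19{,}936$ multidegrees of which $19{,}312$ are skipped via \cref{cor:MatroidComponentSkip}, runtime $\approx 25$ minutes). Your write-up is in fact more explicit than the paper's, spelling out the justification for each step of the algorithm.
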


\begin{table}
\centering
\begin{tabular}{|p{1.2cm}||p{2.3cm}|p{2.3cm}|p{1.7cm}|p{2.3cm}|p{2.3cm}|p{1cm}|p{1.5cm}|}
\hline
\multicolumn{8}{|c|}{Minimal Generators for 4 and 5 Leaf Sunlet Networks} \\ 
\hline
Leaves & Total Degree & Monomials & Grading Rank & Multidegrees & Skipped Components &  Min. Gens. & Time (sec) \\
\hline
\hline
4 & 2 & 2080 & 13 & 1720 & 1708 & 12 & 9.66 \\ 
4 & 3 & 45,760 & 13 & 25,152 & 24,304 & 64 & 492.31 \\
\hline
\hline
5 & 2 & 32,896 & 16 & 19,936 & 19,312 & 648 & 1504.03 \\ 
5 & 3 & 2,829,056 & 16 & 637,440 & - & - & -\\
\hline
\end{tabular}
\caption{This table shows the number of monomials, distinct multidegrees, the number of minimal generators, and the time each computation took in each total degree for both 4 and 5 leaf sunlet networks. The column skipped components corresponds to the number of components which can be skipped completely using \cref{cor:MatroidComponentSkip}. }
\label{table:K3P_Gens}
\end{table}

\subsection{The TN93 Model on a 4-Leaf Tree}
As discussed in the previous section, group-based models for trees have many nice algebraic properties associated to them. In particular, there is a linear change of coordinates which realizes the associated varieties as toric varieties. In practice, these models may not be the most biologically relevant. For example, it might not be a reasonable assumption for the root distribution $\pi$ to be uniform. 

In this section, we consider the Timura-Nei (TN93) model \cite{TN93} as studied in \cite{casanellas2023novel} and compute all of the quadratic invariants for a $4$-leaf tree. This model is \emph{algebraic time-reversible} meaning that for each transition matrix $M$, we have that 
\[
    \pi_i M_{i,j} = \pi_j M_{j,i}
\]
and that the collection of transition matrices all commute with each other. These assumptions amount to saying that the transition matrices are simultaneously diagonalizable and that the root distribution $\pi$ is an eigenvector of $M^T$ with eigenvalue 1. The TN93 model enjoys much more flexibility than group-based models.

\begin{definition}
    The TN93 model is a $4$-state algebraic time-reversible model with transition matrices of the form 
    \[
    \begin{pmatrix}
        *_1 & \pi_2 c & \pi_3 b & \pi_4 b \\
        \pi_1 c & *_2 & \pi_3 b & \pi_4 b \\
        \pi_1 b & \pi_2 b & *_3 & \pi_4 d \\
        \pi_1 b & \pi_2 b & \pi_3 d & *_4
    \end{pmatrix}
    \]
    where $*_i$ is chosen so that each row sums to 1 where the root distribution is $\pi = (\pi_1,\pi_2,\pi_3,\pi_4)$.
\end{definition}

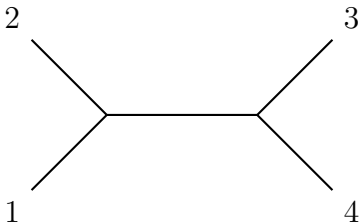
\begin{figure}
        \begin{tikzpicture}[scale = 1, thick]
        \draw (1,1)--(2,2);
        \draw (2,2)--(4,2);
        \draw (4,2)--(5,1);
        \draw (2,2)--(1,3);
        \draw (4,2)--(5,3);

        \draw (1,1) node[below left] {$1$};
        \draw (1,3) node[above left] {$2$};
        \draw (5,3) node[above right] {$3$};
        \draw (5,1) node[below right] {$4$};
        
        \end{tikzpicture}
        \caption{A 4-leaf tree with one non-trivial split $12|34$.}
        \label{figure: TN93 quartet}
\end{figure}

We will focus on the quartet tree $T$ which is pictured in Figure \ref{figure: TN93 quartet} under the TN93 model. Since the transition matrices are simultaneously diagonalizable, if we ignore the stochastic restrictions on these matrices, we see that the variety is parameterized by the $5 \times 4 = 20$ eigenvalues of these matrices. We also assume that the root distribution is fixed and generic, so instead of working over $\Cc$, we work over the fraction field $K = \Cc(\pi_1,\pi_2,\pi_3,\pi_4)$. These observations along with the fact that this is a $4$-state model means that the parametrization takes the following form.
\[
    \phi_T : K[p_{i_1,i_2,i_3,i_4} ~|~ (i_1,i_2,i_3,i_4) \in [4]^4] \to K[\lambda_{1,1}, \dotsc, \lambda_{5,4}]
\]

In \cite{casanellas2023novel}, the authors describe a linear change of coordinates from the probability coordinates to a new set of coordinates $q_{i_1,i_2,i_3,i_4}$ which has two nice properties: (1) 176 of the $q_{i_1,i_2,i_3,i_4}$'s map to 0 and (2) 71 of the remaining non-zero coordinates are monomials in the eigenvalues of the transition matrices. We will refer to the set of indices of the 80 non-zero coordinates by $\mathcal{NZ}_T$. In particular, we can greatly reduce the number of variables in the elimination ideal from 276 to just 100. The new parametrization takes the following form.
\[
    \varphi_T : K[q_{i_1,i_2,i_3,i_4} ~|~ (i_1,i_2,i_3,i_4) \in \mathcal{NZ}_T] \to K[\lambda_{1,1}, \dotsc, \lambda_{5,4}] 
\]
We let $I_T$ denote the kernel of $\varphi_T$. The authors showed that on an open set of $\mathcal{V}(I_T)$ the variety is a complete intersection and is cut out by 64 equations of degree at most 4 \cite[Theorem 5.14]{casanellas2023novel}.

While the number of parameters is greatly reduced from the general Markov model, computing a Gr{\"o}bner basis for $I_T$ is probably still out of reach. However, using Algorithm \ref{alg:componentsOfKernel}, we found all minimal quadrics in $I_T$. We see that there are many more minimal quadrics cutting out the full variety.
 
\begin{theorem}
    There are 375 minimal quadratic invariants of $T$ under the TN93 model. 
\end{theorem}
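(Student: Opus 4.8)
The plan is to prove the claim by running \cref{alg:componentsOfKernel} with $d = 2$ on the map $\varphi_T$. Since the algorithm is designed to output precisely a minimal homogeneous generating set of $\ker(\varphi_T)$ up to the chosen degree --- this is exactly what \cref{prop:TrimMonomialBasis} guarantees, as in each multidegree $\beta$ one only searches the complement $V_\beta$ of the span of the already-computed lower-degree generators --- it suffices to run the algorithm through degree $2$ and count how many of the returned generators have total degree $2$; the assertion is that this count is $375$.

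First I would construct $\varphi_T : K[q_{\mathbf{i}} \mid \mathbf{i} \in \mathcal{NZ}_T] \to K[\lambda_{1,1},\dotsc,\lambda_{5,4}]$ explicitly over the field $K = \Cc(\pi_1,\dotsc,\pi_4)$, starting from $\psi_T$ and applying the linear change of coordinates of \cite{casanellas2023novel} which sends $176$ of the probability coordinates to $0$ and expresses the remaining $80$ coordinates as polynomials (indeed $71$ of them as monomials) in the $20$ eigenvalue parameters. With $\varphi_T$ and its elimination ideal $J = \langle q_{\mathbf{i}} - \varphi_T(q_{\mathbf{i}}) \rangle$ in hand, \cref{cor:MaxGrading} --- via the explicit descriptions in \cref{lemma:homogeneitySpace} and \cref{thm: grading the kernel} --- produces, by solving a small linear system in the exponent vectors of each generator of $J$, a maximal-rank integer matrix $A$ for which $\ker(\varphi_T)$ is homogeneous. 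One then checks that $\mathbbm{1}$ lies in the row space of $A$, so the multigrading refines total degree and \cref{alg:componentsOfKernel} applies.

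Next I would carry out the $d \le 2$ passes of \cref{alg:componentsOfKernel}. For each multidegree $\beta = A\alpha$ with $\alpha \cdot \mathbbm{1} \le 2$, I would first apply the matroid test of \cref{prop:AlgebraicMatroid} and \cref{cor:MatroidComponentSkip}: form the Jacobian $J(\varphi_T)$, substitute random numerical values for the $\lambda_{i,j}$ (and for the $\pi_i$), and discard $\beta$ whenever its variable support $S$ satisfies $\rank(J(\varphi_T)_S) = |S|$. On each surviving component I would compute the monomial basis $M_\beta$, split off the lifted lower-degree generators as in \cref{prop:TrimMonomialBasis}, expand $\varphi_T$ on a generic element $f^{(\beta)}$ of $V_\beta$, collect the coefficients of the monomials in the $\lambda_{i,j}$ to obtain the linear system $L_\beta c = 0$, and record $\dim_K \ker(L_\beta)$. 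Summing these dimensions over all non-skipped $\beta$ with $\alpha \cdot \mathbbm{1} = 2$ yields the number of minimal quadratic invariants, and the computation returns $375$.

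The principal obstacle is computational rather than conceptual: the elimination ideal sits in roughly $100$ variables, so the single enormous system coming from \cref{eqn:GenericTotalDegPoly} is out of reach, and even after passing to the multigrading one relies heavily on \cref{cor:MatroidComponentSkip} to prune the many empty components before evaluating $\varphi_T$ on their monomial bases. A secondary subtlety is that $\varphi_T$ is defined over the function field $K$ rather than $\Cc$: one may either do all of the linear algebra symbolically over $K$, or specialize $(\pi_1,\dotsc,\pi_4)$ to random complex values. In the latter case the rank of $J(\varphi_T)_S$ can only drop under specialization, so a component that ought to be skipped might not be --- costing time but never correctness --- while $\rank(L_\beta)$ equals its generic value on a Zariski-dense set of parameters, so a random choice of the $\pi_i$ recovers the true dimensions $\dim_K\ker(L_\beta)$ with probability one. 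Running the \texttt{Macaulay2} implementation of \cref{alg:componentsOfKernel} in this way produces the stated $375$ minimal quadratic invariants.
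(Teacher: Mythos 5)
Your proposal matches the paper's approach exactly: the theorem is a computational result obtained by running Algorithm~\ref{alg:componentsOfKernel} with $d=2$ on $\varphi_T$ (after passing to the reduced $q$-coordinates of \cite{casanellas2023novel}), using the multigrading from \cref{cor:MaxGrading} and the matroid pruning from \cref{cor:MatroidComponentSkip}, and counting the quadrics that remain. Your remarks on working over $K = \Cc(\pi_1,\dotsc,\pi_4)$ versus specializing the $\pi_i$, and on why specialization preserves correctness of the count with probability one, are a sound and welcome elaboration of a point the paper leaves implicit.
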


\section*{Acknowledgements}
Part of this research was performed while the authors were visiting the Institute for Mathematical and Statistical Innovation (IMSI), which is supported by the National Science Foundation (Grant No. DMS-1929348). Benjamin Hollering was supported by the Alexander von Humboldt Foundation. Joseph Cummings was supported by NSF CCF-1812746.

\bibliographystyle{plain}
\bibliography{refs.bib}

\begin{thebibliography}{10}

\bibitem{allman2010identifiability}
Elizabeth~S Allman, Sonia Petrovic, John~A Rhodes, and Seth Sullivant.
\newblock Identifiability of two-tree mixtures for group-based models.
\newblock {\em IEEE/ACM transactions on computational biology and bioinformatics}, 8(3):710--722, 2010.

\bibitem{allman2003invariants}
Elizabeth~S. Allman and John~A. Rhodes.
\newblock Phylogenetic invariants for the general markov model of sequence mutation.
\newblock {\em Mathematical Biosciences}, 186(2):113--144, 2003.

\bibitem{allman2008gmm}
Elizabeth~S. Allman and John~A. Rhodes.
\newblock Phylogenetic ideals and varieties for the general markov model.
\newblock {\em Advances in Applied Mathematics}, 40(2):127--148, 2008.

\bibitem{bates2011salmon}
Daniel~J. Bates and Luke Oeding.
\newblock Toward a salmon conjecture.
\newblock {\em Exp. Math.}, 20(3):358--370, 2011.

\bibitem{msolve}
J\'{e}r\'{e}my Berthomieu, Christian Eder, and Mohab Safey El~Din.
\newblock Msolve: A library for solving polynomial systems.
\newblock ISSAC '21, page 51–58, New York, NY, USA, 2021. Association for Computing Machinery.

\bibitem{casanellas2023novel}
Marta Casanellas, Roser~Homs Pons, and Angélica Torres.
\newblock A novel algebraic approach to time-reversible evolutionary models, 2023.

\bibitem{chifman2014quartet}
Julia Chifman and Laura Kubatko.
\newblock {Quartet Inference from SNP Data Under the Coalescent Model}.
\newblock {\em Bioinformatics}, 30(23):3317--3324, 08 2014.

\bibitem{coxlittleoshea}
David~A. Cox, John Little, and Donal O'Shea.
\newblock {\em Ideals, varieties, and algorithms}.
\newblock Undergraduate Texts in Mathematics. Springer, Cham, fourth edition, 2015.
\newblock An introduction to computational algebraic geometry and commutative algebra.

\bibitem{cummings2023multigraded}
Joseph Cummings and Jonathan Hauenstein.
\newblock Multi-graded macaulay dual spaces, 2023.

\bibitem{cummings2021invariants}
Joseph Cummings, Benjamin Hollering, and Christopher Manon.
\newblock Invariants for level-1 phylogenetic networks under the cavendar-farris-neyman model, 2021.

\bibitem{draisma2009equivariant}
Jan Draisma and Jochen Kuttler.
\newblock On the ideals of equivariant tree models.
\newblock {\em Math. Ann.}, 344(3):619--644, 2009.

\bibitem{eriksson2005tree}
Nicholas Eriksson.
\newblock Tree construction using singular value decomposition.
\newblock In {\em Algebraic statistics for computational biology}, pages 347--358. Cambridge Univ. Press, New York, 2005.

\bibitem{evans1993invariants}
Steven~N. Evans and T.~P. Speed.
\newblock Invariants of some probability models used in phylogenetic inference.
\newblock {\em Ann. Statist.}, 21(1):355--377, 1993.

\bibitem{faugere2002f5}
Jean~Charles Faug\`{e}re.
\newblock A new efficient algorithm for computing gr\"{o}bner bases without reduction to zero (f5).
\newblock In {\em Proceedings of the 2002 International Symposium on Symbolic and Algebraic Computation}, ISSAC '02, page 75–83, New York, NY, USA, 2002. Association for Computing Machinery.

\bibitem{faugere2016complexity}
Jean-Charles Faugère, Mohab {Safey El Din}, and Thibaut Verron.
\newblock On the complexity of computing gröbner bases for weighted homogeneous systems.
\newblock {\em Journal of Symbolic Computation}, 76:107--141, 2016.

\bibitem{friedlan2012salmon}
Shmuel Friedland and Elizabeth Gross.
\newblock A proof of the set-theoretic version of the salmon conjecture.
\newblock {\em J. Algebra}, 356:374--379, 2012.

\bibitem{M2}
Daniel~R. Grayson and Michael~E. Stillman.
\newblock Macaulay2, {V}ersion 1.20, 2022.
\newblock {\tt http://www.math.uiuc.edu/Macaulay2/}.

\bibitem{gross2018distinguishing}
Elizabeth Gross and Colby Long.
\newblock Distinguishing phylogenetic networks.
\newblock {\em SIAM Journal on Applied Algebra and Geometry}, 2(1):72--93, 2018.

\bibitem{hendy1996complete}
Michael~D Hendy and David Penny.
\newblock Complete families of linear invariants for some stochastic models of sequence evolution, with and without the molecular clock assumption.
\newblock {\em Journal of Computational Biology}, 3(1):19--31, 1996.

\bibitem{hollering2021identifiability}
Benjamin Hollering and Seth Sullivant.
\newblock Identifiability in phylogenetics using algebraic matroids.
\newblock {\em J. Symbolic Comput.}, 104:142--158, 2021.

\bibitem{jensen2008ComputingGF}
Anders~Nedergaard Jensen.
\newblock Computing gr{\"o}bner fans and tropical varieties in gfan.
\newblock 2008.

\bibitem{TN93}
Tamura K and Nei M.
\newblock Estimation of the number of nucleotide substitutions in the control region of mitochondrial dna in humans and chimpanzees.
\newblock {\em Mol Biol Evol.}, 10(3):512--26, 1993 May.

\bibitem{CCA2}
Martin Kreuzer and Lorenzo Robbiano.
\newblock {\em Computational commutative algebra. 2}.
\newblock Springer-Verlag, Berlin, 2005.

\bibitem{long2015identifiability}
Colby Long and Seth Sullivant.
\newblock Identifiability of 3-class {J}ukes-{C}antor mixtures.
\newblock {\em Adv. in Appl. Math.}, 64:89--110, 2015.

\bibitem{martin2023algebraic}
Samuel Martin, Vincent Moulton, and Richard~M. Leggett.
\newblock Algebraic invariants for inferring 4-leaf semi-directed phylogenetic networks.
\newblock {\em bioRxiv}, 2023.

\bibitem{mateusz2011geometry}
Mateusz Micha\l~ek.
\newblock Geometry of phylogenetic group-based models.
\newblock {\em J. Algebra}, 339:339--356, 2011.

\bibitem{mathrepo}
{MATHREPO} {Mathematical Data and Software}.
\newblock \url{https://mathrepo.mis.mpg.de/MultigradedImplicitization}, 2023.
\newblock [Online; accessed 1 November 2023].

\bibitem{OSCAR}
Oscar -- open source computer algebra research system, version 0.14.0-dev, 2023.

\bibitem{Pachter-Sturmfels04}
Lior Pachter and Bernd Sturmfels.
\newblock Tropical geometry of statistical models.
\newblock {\em Proc. Natl. Acad. Sci. USA}, 101(46):16132--16137, 2004.

\bibitem{rosen2014computing}
Zvi Rosen.
\newblock Computing algebraic matroids.
\newblock {\em arXiv preprint arXiv:1403.8148}, 2014.

\bibitem{steel2016phylogeny}
Mike Steel.
\newblock {\em Phylogeny: discrete and random processes in evolution}.
\newblock SIAM, 2016.

\bibitem{sturma2023testing}
Nils Sturma, Mathias Drton, and Dennis Leung.
\newblock Testing many constraints in possibly irregular models using incomplete u-statistics, 2023.

\bibitem{sturmfels2005toric}
Bernd Sturmfels and Seth Sullivant.
\newblock Toric ideals of phylogenetic invariants.
\newblock {\em Journal of Computational Biology}, 12(2):204--228, 2005.

\bibitem{algstat}
Seth Sullivant.
\newblock {\em Algebraic statistics}, volume 194 of {\em Graduate Studies in Mathematics}.
\newblock American Mathematical Society, Providence, RI, 2018.

\end{thebibliography}
\end{document}